\definecolor{verylight}{gray}{0.97}
\definecolor{light}{gray}{0.9}
\definecolor{medium}{gray}{0.85}
\numberwithin{equation}{section}
\def\NZQ{\mathbb}               
\def\ZZ{{\NZQ Z}}
\def\frk{\mathfrak}               
\def\Phi{{\frk N}}
\def\ab{{\bold a}}
\def\bb{{\bold b}}
\def\xb{{\bold x}}
\def\opn#1#2{\def#1{\operatorname{#2}}} 
\opn\chara{char} \opn\length{\ell} \opn\pd{pd} \opn\rk{rk}
\opn\projdim{proj\,dim} \opn\injdim{inj\,dim} \opn\rank{rank}
\opn\depth{depth} \opn\grade{grade} \opn\height{height}
\opn\embdim{emb\,dim} \opn\codim{codim}
\opn\Tr{Tr} \opn\bigrank{big\,rank}
\opn\superheight{superheight}\opn\lcm{lcm}
\opn\trdeg{tr\,deg}
\opn\reg{reg} \opn\lreg{lreg} \opn\ini{in} \opn\lpd{lpd}
\opn\size{size}\opn{\mult}{mult}
\opn\div{div} \opn\Div{Div} \opn\cl{cl} \opn\Cl{Cl}
\opn\Spec{Spec} \opn\Supp{Supp} \opn\supp{supp} \opn\Sing{Sing}
\opn\Ass{Ass} \opn\Min{Min}
\opn\Ann{Ann} \opn\Rad{Rad} \opn\Soc{Soc}
\opn\Syz{Syz} \opn\Im{Im} \opn\Ker{Ker} \opn\Coker{Coker}
\opn\Am{Am} \opn\Hom{Hom} \opn\Tor{Tor} \opn\Ext{Ext}
\opn\End{End} \opn\Aut{Aut} \opn\id{id} \opn\ini{in}
\opn\nat{nat}
\opn\pff{pf}
\opn\Pf{Pf} \opn\GL{GL} \opn\SL{SL} \opn\mod{mod} \opn\ord{ord}
\opn\Gin{Gin}
\opn\Hilb{Hilb}\opn\adeg{adeg}\opn\std{std}\opn\ip{infpt}
\opn\Pol{Pol}
\opn\sat{sat}
\opn\Var{Var}
\opn\Gen{Gen}
\opn\aff{aff} \opn\con{conv} \opn\relint{relint} \opn\st{st}
\opn\lk{lk} \opn\cn{cn} \opn\core{core} \opn\vol{vol}
\opn\link{link} \opn\star{star}
\opn\gr{gr}
\def\Gc{{\mathcal G}}
\def\Lc{{\mathcal L}}
\def\Mc{{\mathcal M}}
\def\Dc{{\mathcal D}}
\def\pot#1#2{#1[\kern-0.28ex[#2]\kern-0.28ex]}
\opn\dirlim{\underrightarrow{\lim}}
\opn\inivlim{\underleftarrow{\lim}}
\def\Implies{\ifmmode\Longrightarrow \else
        \unskip${}\Longrightarrow{}$\ignorespaces\fi}
\def\implies{\ifmmode\Rightarrow \else
        \unskip${}\Rightarrow{}$\ignorespaces\fi}
\def\iff{\ifmmode\Longleftrightarrow \else
        \unskip${}\Longleftrightarrow{}$\ignorespaces\fi}
 \newtheorem{Theorem}{Theorem}[section]
 \newtheorem{Lemma}[Theorem]{Lemma}
 \newtheorem{Corollary}[Theorem]{Corollary}
 \newtheorem{Proposition}[Theorem]{Proposition}
 \newtheorem{Remark}[Theorem]{Remark}
 \newtheorem{Example}[Theorem]{Example}
 \newtheorem{Definition}[Theorem]{Definition}
\let\epsilon\varepsilon
\let\phi=\varphi
\let\kappa=\varkappa
\def\qed{\ifhmode\textqed\fi
      \ifmmode\ifinner\quad\qedsymbol\else\dispqed\fi\fi}
\def\textqed{\unskip\nobreak\penalty50
       \hskip2em\hbox{}\nobreak\hfil\qedsymbol
       \parfillskip=0pt \finalhyphendemerits=0}
\def\dispqed{\rlap{\qquad\qedsymbol}}
\opn\dis{dis}
\def\pnt{{\raise0.5mm\hbox{\large\bf.}}}
\opn\Lex{Lex}
\newcommand{\inD}[1][\relax]{\def\argone{#1}\def\temprelax{\relax}
  \ifx\argone\temprelax\right.\else\,\middle|#1\right.{}\fi}
\newif\ifbinary
\begin{document}

\title{The join-meet ideal of a finite lattice}

\author{Viviana Ene and Takayuki Hibi}

\address{Viviana Ene, Faculty of Mathematics and Computer Science, Ovidius University, Bd.\ Mamaia 124,
 900527 Constanta, Romania} \email{vivian@univ-ovidius.ro}

\address{Takayuki Hibi, Department of Pure and Applied Mathematics, Graduate School of Information Science and Technology,
Osaka University, Toyonaka, Osaka 560-0043, Japan}
\email{hibi@math.sci.osaka-u.ac.jp}

\keywords{Finite lattices, Gr\"obner bases}

\subjclass{13P10, 06A11, 06B05, 03G10}
\thanks{The first author was supported  by the JSPS Invitation Fellowship Programs for Research in Japan.}

 \begin{abstract}
Radical binomial ideals associated with finite lattices are studied. Gr\"obner basis theory turns out to be an efficient tool in this investigation.
 \end{abstract}

\maketitle

\section*{Introduction}

Let $L$ be a finite lattice and $K[L]$ the polynomial ring over a field $K$ whose variables are the elements of $L.$ Let $I_L$ be the {\em join-meet 
ideal} of $L,$ that is, the ideal of $K[L]$ which is generated by all the binomials of the form $f=ab-(a\wedge b)(a\vee b)$, where $a,b\in L$ are 
incomparable elements. Of course one may ask whether algebraic properties of $I_L$ are related to the combinatorial properties of $L.$  $I_L$ is a 
prime ideal if and only if $L$ is distributive as it was shown in \cite{H} and if $L$ is distributive, the Gr\"obner bases of $I_L$ with respect to 
various monomial orders have been studied; see, for instance, \cite{H}, \cite{HH1}, \cite{AHH}, \cite{Q}. In the same hypothesis on $L,$ the toric 
ring $K[L]/I_L$  is well understood; see \cite{H}, \cite{H1}, \cite{H2}, \cite{H3}.

Almost nothing is known about the join-meet ideal $I_L$ when $L$ is not distributive. In the present paper we focus on the join-meet ideals of modular 
and non-distributive lattices. For basic properties of lattices, like distributivity and modularity, we refer the reader to the well known monographs
\cite{B} and \cite{St}.

It was conjectured in \cite{HH1} that, given a modular lattice $L$, for any monomial order $<$ on $K[L]$ the initial ideal $\ini_<(I_L)$ is not 
squarefree, unless $L$ is distributive. We give a proof of this conjecture in Section~\ref{conjecture}. This result shows, in particular, that  
for deciding whether a join-meet ideal $I_L$ of a modular and non-distributive lattice $L$ is radical one cannot use the known 
statement that a polynomial ideal is radical if it has a  squarefree initial ideal. Moreover, easy examples show that even if the lattice $L$
is rather closed to a distributive lattice, the ideal $I_L$ might not be radical; see Example~\ref{latticeN}. A general 
characterization of radical join-meet ideals associated with modular non-distributive lattices seems to be difficult.
However, in Section~\ref{modular}, we find a class of modular non-distributive lattices $L$ whose join-meet ideal $I_L$ is radical. To prove this 
property we intensively use the Gr\"obner basis theory.

For radical join-meet ideals, in Section~\ref{radsection}, we describe the minimal prime ideals. This description is used later, in Section~\ref{minsection}, to obtain a complete characterization of the minimal primes of the radical join-meet ideals studied in Section~\ref{modular}.

\section{The squarefree conjecture}
\label{conjecture}

Let $L$ be a finite lattice and $K[L]$ the polynomial ring  over a field $K$ whose variables are the elements of $L.$ A binomial of $K[L]$ of the form
$f=ab-(a\wedge b)(a\vee b)$, where $a,b\in L$ are incomparable, is called a {\em basic binomial}. In some recent papers, the basic binomials are called Hibi relations.

\begin{Definition}
The {\em join-meet ideal} of $L$ is the ideal of $K[L]$ generated by the basic binomials, that is, 
\[
I_L=(ab-(a\wedge b)(a\vee b): a,b\in L, a,b \text{ incomparable })\subset K[L]. 
\]
\end{Definition}

The join-meet ideal of a lattice was introduced in \cite{H}.  For fundamental notions on lattices we refer to \cite{B} and \cite{St}. 

The main result of this section answers positively a conjecture made in \cite{HH1}. We first need a preparatory result on modular and non-distributive lattices which might be known, but we include its proof since we could not find any reference. 

\begin{Lemma}\label{small diamond}
Let $L$ be a modular non-distributive lattice. Then $L$ has a diamond sublattice $L^\prime$  such that 
$\rank \max L^\prime - \rank \min L^\prime =2.$
\end{Lemma}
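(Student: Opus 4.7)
The plan is to invoke Birkhoff's forbidden-sublattice theorem (in Dedekind's modular version) to produce \emph{some} diamond sublattice of $L$, and then shrink it until its top and bottom differ in rank by exactly $2$. I will use that a finite modular lattice has a well-defined rank function satisfying $\rank(x)+\rank(y)=\rank(x\vee y)+\rank(x\wedge y)$, and that every interval of $L$ is again modular.

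Since a modular lattice is distributive if and only if it contains no diamond as a sublattice, the hypothesis yields a diamond $\{0',a,b,c,1'\}\subset L$ with $a\wedge b=a\wedge c=b\wedge c=0'$ and $a\vee b=a\vee c=b\vee c=1'$. Applying the modular rank identity to the three pairs $\{a,b\}, \{a,c\}, \{b,c\}$ forces $\rank(a)=\rank(b)=\rank(c)=(\rank(0')+\rank(1'))/2$, so $d:=\rank(1')-\rank(0')$ is even. If $d=2$ there is nothing to prove. Otherwise $d\geq 4$; I would pick an atom $a_1$ of the interval $[0',1']$ with $a_1\leq a$ and set
\[
b_1:=(a_1\vee c)\wedge b, \qquad c_1:=(a_1\vee b)\wedge c.
\]
Two applications of the modular rank formula, together with $a_1\wedge c\leq a\wedge c=0'$ and $(a_1\vee c)\vee b=1'$, should give $\rank(b_1)=\rank(c_1)=\rank(0')+1$, so that $b_1$ and $c_1$ are atoms of $[0',1']$ (in particular nonzero and distinct from $a_1$).

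The final step is to check that $\{0',a_1,b_1,c_1,T\}$, with $T:=(a_1\vee b)\wedge(a_1\vee c)$, is a diamond sublattice of the desired shape. A further rank count gives $\rank(T)=\rank(0')+2$. The inclusions $a_1,c_1\leq T$ are immediate from the definitions, and a rank comparison then forces $a_1\vee c_1=T$; the identity $a_1\vee b_1=T$ follows symmetrically, and $b_1\vee c_1=T$ from $b_1,c_1\leq T$ combined with a final rank count. The pairwise meets all collapse to $0'$ because $a_1,b_1,c_1$ lie below $a,b,c$ respectively and $a\wedge b=a\wedge c=b\wedge c=0'$. The main obstacle is the rank bookkeeping: one must track several modular rank identities in sequence to confirm simultaneously that $b_1,c_1$ are atoms and that the three new joins collapse to a common element of rank $\rank(0')+2$. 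Conceptually the argument is just \emph{push the middle row of the diamond down to atoms of $[0',1']$}; the delicacy lies entirely in making the rank identities line up.
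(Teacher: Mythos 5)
Your argument is correct, and it takes a genuinely different route from the paper. The paper fixes a diamond whose rank spread $\rank\max - \rank\min$ is minimal, assumes this spread exceeds $2$, and then runs a long chain of explicit modularity computations, repeatedly excluding pentagon sublattices (impossible in a modular lattice), until it manufactures a diamond of strictly smaller spread --- a contradiction with minimality. You instead start from an \emph{arbitrary} diamond $\{0',a,b,c,1'\}$ and shrink it directly, using the fact that the height function of a finite modular lattice satisfies $\rank(x)+\rank(y)=\rank(x\vee y)+\rank(x\wedge y)$: this identity forces $\rank a=\rank b=\rank c$, and your elements $a_1$, $b_1=(a_1\vee c)\wedge b$, $c_1=(a_1\vee b)\wedge c$, $T=(a_1\vee b)\wedge(a_1\vee c)$ do check out --- I verified that $a_1\wedge c=0'$ and $(a_1\vee c)\vee b=1'$ give $\rank b_1=\rank c_1=\rank(0')+1$, that $\rank T=\rank(0')+2$, that the pairwise meets collapse to $0'$ (which also gives the distinctness of $a_1,b_1,c_1$ you assert in passing), and that the three joins equal $T$ by the rank comparison inside $[0',T]$. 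What your approach buys is brevity and a constructive statement (every diamond sits over one of height $2$ with the same bottom); what it costs is the reliance on the modular rank identity as an external input, whereas the paper's proof works directly from the lattice axioms plus the absence of pentagons and only ever uses that rank is well defined. Do spell out the intermediate rank computations (e.g.\ $\rank(a_1\vee c)=\rank c+1$) rather than leaving them as ``should give,'' since the whole proof lives or dies by that bookkeeping.
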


\begin{proof}
Let $\delta$ be a diamond of $L$ labeled  as in Figure~\ref{small1} (i) of minimal rank, that is, $\rank e-\rank a=$ minimal. 

\begin{figure}[bht]
\begin{center}
\psset{unit=0.5cm}
\begin{pspicture}(-10.3,-2.5)(4,3.5)

\psline(-9,3)(-11,1)
\psline(-9,3)(-9,1)
\psline(-9,3)(-7,1)
\psline(-11,1)(-9,-1)
\psline(-9,1)(-9,-1)
\psline(-7,1)(-9,-1)

\rput(-9,3){$\bullet$}
\put(-9.1,3.3){$e$}
\rput(-11,1){$\bullet$}
\put(-11.6,0.9){$b$}
\rput(-9,1){$\bullet$}
\put(-8.6,0.9){$c$}
\rput(-7,1){$\bullet$}
\put(-6.7,1){$d$}
\rput(-9,-1){$\bullet$}
\put(-9.1,-1.6){$a$}

\rput(-13,-1){(i)}

\psline(1,3)(-1,1)
\psline(1,3)(1,1)
\psline(1,3)(3,2)
\psline(3,2)(3,0)
\psline(-1,1)(1,-1)
\psline(1,1)(1,-1)
\psline(3,0)(1,-1)

\rput(1,3){$\bullet$}
\put(1.1,3.3){$e$}
\rput(-1,1){$\bullet$}
\put(-1.6,0.9){$b$}
\rput(1,1){$\bullet$}
\put(1.4,0.9){$c$}
\rput(3,2){$\bullet$}
\put(3.3,2){$d$}
\rput(1,-1){$\bullet$}
\put(0.9,-1.6){$a$}
\rput(3,0){$\bullet$}
\put(3.3,0){$f$}

\rput(5,-1){(ii)}
\end{pspicture}
\end{center}
\caption{}\label{small1}
\end{figure}

We show that $\rank a-\rank e=2.$ Let us assume that $\rank e > \rank a +2.$ By duality, we may assume, for instance, that $\rank d > \rank a +1,$ 
that is, there exists $f\in L$ such that $a < f < d.$ Then we get the lattice displayed in Figure~\ref{small1} (ii) where $c\wedge f=c\wedge d=a$ and 
$c\vee f \leq c\vee d=e.$ If $c\vee f=e,$ then $L$ has a pentagon subblattice (with the elements $a,c,f,d,e$), which is impossible since $L$ is modular.
Therefore, we must have $c\vee f < e. $

\begin{figure}[bht]
\begin{center}
\psset{unit=0.5cm}
\begin{pspicture}(-5.3,-2.5)(4,3.5)

\psline(6,3)(4,1)
\psline(6,3)(6,1)
\psline(6,3)(8,1)
\psline(4,1)(6,-1)
\psline(6,1)(6,-1)
\psline(6,1)(6,-1)
\psline(8,1)(6,-1)

\rput(6,3){$\bullet$}
\put(5.9,3.3){$c\vee f$}
\rput(4,1){$\bullet$}
\put(0,0.9){$b\wedge(c\vee f)$}
\rput(6,1){$\bullet$}
\put(6.4,0.9){$c$}
\rput(8,1){$\bullet$}
\put(8.3,1){$f$}
\rput(6,-1){$\bullet$}
\put(5.9,-1.6){$a$}

\rput(-13,-1){(i)}

\psline(-9,3)(-11,1)
\psline(-9,3)(-7,2)
\psline(-7,2)(-7,0)
\psline(-11,1)(-9,-1)
\psline(-7,0)(-9,-1)

\rput(-9,3){$\bullet$}
\put(-8.9,3.3){$e$}
\rput(-11,1){$\bullet$}
\put(-11.6,0.9){$b$}
\rput(-7,2){$\bullet$}
\put(-6.7,2){$c\vee f$}
\rput(-9,-1){$\bullet$}
\put(-9.1,-1.6){$a$}
\rput(-7,0){$\bullet$}
\put(-6.7,0){$c$}

\rput(2,-1){(ii)}
\end{pspicture}
\end{center}
\caption{}\label{small2}
\end{figure}

We now look at the lattice with the elements $a,b,c,c\vee f,$ and $e.$ Here we have $b\vee(c\vee f)=(b\vee c)\vee f=e\vee f=e$ and 
$b\wedge (c\vee f)\geq a.$ If $b\wedge (c\vee f)=a$ we get again a pentagon sublattice of $L$; see Figure~\ref{small2} (i). Since $L$ is 
modular,  we must have $b\wedge (c\vee f) > a.$ We look at the lattice with elements $a,c,f,b\wedge (c\vee f),$ and $c\vee f$; see Figure~\ref{small2} 
(ii). The following relations hold:
\[
c\wedge(b\wedge (c\vee f))=(c\wedge b)\wedge (c\vee f)=a,
\]
and
\[
c\vee(b\wedge (c\vee f))=(c\vee b)\wedge (c\vee f)=e\wedge (c\vee f)=c\vee f,
\]
the first equality in the latter relation being true by modularity. Moreover, we have
\[
f\wedge (b\wedge(c\vee f))=(f\wedge b)\wedge (c\vee f)=a\wedge (c\vee f)=a,
\]
and 
\[
f\vee (b\wedge (c\vee f))=(f\vee b)\wedge (c\vee f),
\]
again by modularity, and, thus, 
\[
f\vee (b\wedge (c\vee f))\leq c\vee f.
\]
If $f\vee (b\wedge (c\vee f))= c\vee f,$ then we get a diamond sublattice of $L$ as in Figure~\ref{small2} (ii) of smaller rank than $\delta$, which is impossible by our assumption. Hence we must have 
\[
(f\vee b)\wedge (c\vee f) < c\vee f.
\]

\begin{figure}[bht]
\begin{center}
\psset{unit=0.5cm}
\begin{pspicture}(-5.3,-2.5)(4,3.5)

\psline(6,3)(4,1)
\psline(6,3)(6,1)
\psline(6,3)(8,1)
\psline(4,1)(6,-1)
\psline(6,1)(6,-1)
\psline(6,1)(6,-1)
\psline(8,1)(6,-1)

\rput(6,3){$\bullet$}
\put(5.9,3.3){$(c\vee f)\wedge(b\vee f)$}
\rput(4,1){$\bullet$}
\put(0,1.4){$c\wedge(b\vee f)$}
\rput(6,1){$\bullet$}
\put(6.2,0.2){$(c\vee f)\wedge b$}
\rput(8,1){$\bullet$}
\put(8.3,1.5){$f$}
\rput(6,-1){$\bullet$}
\put(5.9,-1.6){$a$}

\rput(-13,-1){(i)}

\psline(-9,3)(-11,1)
\psline(-9,3)(-7,2)
\psline(-7,2)(-7,0)
\psline(-11,1)(-9,-1)
\psline(-7,0)(-9,-1)

\rput(-9,3){$\bullet$}
\put(-8.9,3.3){$c\vee f$}
\rput(-11,1){$\bullet$}
\put(-11.6,0.9){$c$}
\rput(-7,2){$\bullet$}
\put(-6.7,2){$(c\vee f)\wedge(b\vee f)$}
\rput(-9,-1){$\bullet$}
\put(-9.1,-1.6){$a$}
\rput(-7,0){$\bullet$}
\put(-6.7,0){$(c\vee f)\wedge b$}

\rput(2,-1){(ii)}
\end{pspicture}
\end{center}
\caption{}\label{small3}
\end{figure}

Let us consider now the lattice with the elements $a,c, (c\vee f)\wedge b, f\vee(b\wedge(c\vee f))=(c\vee f)\wedge(b\vee f)$, and 
$c\vee f.$ The following equalities hold:
\[
((c\vee f)\wedge b)\wedge c=(c\vee f)\wedge(b\wedge c)=a,
\]
and, by modularity,
\[
c\vee(b\wedge(c\vee f))=(c\vee b)\wedge (c\vee f)=c\vee f.
\]
Next, we have:
\[
c\vee(f\vee(b\wedge(c\vee f)))=(c\vee f)\vee(b\wedge (c\vee f))=c\vee f.
\]
Therefore, if $c\wedge((c\vee f)\wedge(b\vee f))=c\wedge (b\vee f)=a$, then $L$ has a pentagon sublattice; see Figure~\ref{small3} (i). Hence we must 
have \[
c\wedge(b\vee f)>a.
\]
Finally, we look at the lattice with the elements $a,c\wedge(b\vee f), (c\vee f)\wedge b, f,$ and $(c\vee f)\wedge(b\vee f).$ 
The following equalities hold:
\[
f\wedge(c\wedge(b\vee f))=(c\wedge f)\wedge (b\vee f)=a\wedge(b\vee f)=a,
\]
and
\[
f\vee (c\wedge(b\vee f))=(f\vee c)\wedge (b\vee f) \text{ (by modularity) }.
\]
Next,
\[
f\wedge(b\wedge(c\vee f))=(f\wedge b)\wedge (c\vee f)=a\wedge(c\vee f)=a,
\]
and
\[
f\vee(b\wedge(c\vee f))=(f\vee b)\wedge (c\vee f) \text{ (by modularity) }.
\]
We also have:
\[
(c\wedge(b\vee f))\wedge ((c\vee f)\wedge b)=(b\wedge c)\wedge(b\vee f)\wedge (c\vee f)=a
\]
and, by applying modularity,
\[
(c\wedge(b\vee f))\vee ((c\vee f)\wedge b)=((c\wedge(b\vee f))\vee b)\wedge(c\vee f)
\]
\[
=((b\vee c)\wedge(b\vee f))\wedge(c\vee f)=e\wedge(b\vee f)\wedge(c\vee f)=(b\vee f)\wedge(c\vee f).
\]
Consequently, we have got another diamond sublattice of $L$ (see Figure~\ref{small3} (ii)) with a smaller rank than $\delta$, again a contradiction.
\end{proof}

In the proof of the next theorem we use some arguments which are taken from the proof of \cite[Theorem 1.1]{HH1}, but we include them for the convenience of the reader. 

\begin{Theorem}\label{HH conjecture}
Let $L$ be  a modular non-distributive lattice. Then, for any monomial order $<$ on $K[L]$, the initial ideal $\ini_<(I_L)$ is not squarefree.
\end{Theorem}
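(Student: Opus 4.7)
The plan is to produce, for each monomial order $<$ on $K[L]$, an element $f\in I_L$ whose leading monomial $\ini_<(f)$ is not squarefree and whose squarefree part lies outside $\ini_<(I_L)$, which forces $\ini_<(I_L)$ to have a non-squarefree minimal generator.

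I would begin by applying Lemma~\ref{small diamond} to fix a rank-$2$ diamond sublattice $L'=\{a,b,c,d,e\}$ of $L$, in which $a$ is covered by the pairwise incomparable $b,c,d$, each of which is covered by $e$. The three basic binomials $bc-ae,\,bd-ae,\,cd-ae$ lie in $I_L$, and their pairwise differences $b(c-d),\,c(b-d),\,d(b-c)$ lie in $I_L$ too; in particular $bc\equiv bd\equiv cd\equiv ae\pmod{I_L}$. After relabeling $b,c,d$ if necessary, I may assume $b>c>d$ in $<$, so that $bc>bd>cd$. The first candidate witness is
\[
c^{2}d-cd^{2}\;=\;cd(c-d),
\]
which is in $I_L$ because $cd(c-d)\equiv ae(c-d)=d(bc-ae)-c(bd-ae)\pmod{I_L}$ and the right-hand side is manifestly in $I_L$. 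Its leading monomial is $c^{2}d$, not squarefree, with squarefree part $cd$. If $cd\notin\ini_<(I_L)$, the proof is complete.

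Otherwise $cd\in\ini_<(I_L)$. Since $c,d$ are incomparable in $L$, the monomial $cd$ appears in a basic binomial only as $cd-ae$; inspection of the degree-$2$ part of $I_L$ then forces $cd>ae$ in $<$. In that case I would switch to
\[
ad^{2}e-a^{2}e^{2}\;=\;ae(d^{2}-ae)\;\in\;I_L,
\]
whose membership in $I_L$ comes from $ae\equiv cd\pmod{I_L}$ together with the identity $cd^{2}(d-c)=-d\,(c^{2}d-cd^{2})\in I_L$, proved in the previous step. Its leading monomial is $ad^{2}e$ when $d^{2}>ae$ and $a^{2}e^{2}$ when $ae>d^{2}$; both are non-squarefree, with respective squarefree parts $ade$ and $ae$. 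Under the standing assumption $cd>ae$, the monomial $ae$ is smaller than every product of two incomparable elements of $L'$, which rules out $ae\in\ini_<(I_L)$ via the same degree-$2$ analysis as before.

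The hardest step is the parallel claim that $ade\notin\ini_<(I_L)$ in the sub-case $d^{2}>ae$. I would handle it by carefully tracing the possible leading monomials of polynomials $\mu\cdot g$ (where $g$ is a basic binomial of $L$ and $\mu$ a monomial) together with the S-polynomials arising from these, and showing that in the order $<$, under $b>c>d$ and $cd>ae$, each such leading monomial is strictly different from $ade$. Beyond this last bookkeeping in degree three, the whole argument reduces to the straightforward manipulations with the basic binomials of the fixed diamond indicated above.
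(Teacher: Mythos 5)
Your overall strategy --- exhibit a quadratic or cubic binomial in $I_L$ with non-squarefree leading term and then show that its squarefree part avoids $\ini_<(I_L)$ --- is exactly the paper's, and both your identity for $c^2d-cd^2$ and your cubic witness $ad^2e-a^2e^2$ appear there in essentially the same form. But there are two genuine gaps. First, you work only with the three middle elements $b,c,d$ of one chosen diamond $L'$, whereas the open interval $(a,e)$ of $L$ may contain further elements $b_4,\dots,b_k$ of rank $\rank a+1$, and every product $b_ib_j$ of two distinct such elements is congruent to $ae$ modulo $I_L$. The degree-two part of $I_L$ supported on this class is the span of all differences of monomials in $\{ae\}\cup\{b_ib_j: i\neq j\}$, so $cd\in\ini_<(I_L)$ only tells you that $cd$ exceeds \emph{some} monomial of that set, not that $cd>ae$; likewise ``$ae$ is smaller than every product of two incomparable elements of $L'$'' does not exclude $ae\in\ini_<(I_L)$. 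Worse, your key claim that $ade\notin\ini_<(I_L)$ can simply be false for your choice of $d$: if some middle element $b_4$ satisfies $b_4<d$, then $ade-ab_4e=b_4(bd-ae)-d(bb_4-ae)\in I_L$ has leading term $ade$. The paper avoids all of this by taking $b_1,\dots,b_k$ to be \emph{all} elements of the open interval and choosing the minimal element (respectively the minimal product $b_ib_j$) among all of them; your argument needs the same globalization.

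Second, you explicitly defer the hardest step --- that $ade\notin\ini_<(I_L)$ when $d^2>ae$ --- to ``tracing the possible leading monomials of $\mu\cdot g$ and the S-polynomials,'' which is not an argument and gives no reason to expect closure for an arbitrary lattice and an arbitrary monomial order. The paper's treatment of precisely this point is the substantive core of the proof: one writes any $g=abe-u\in I_L$ as a telescoping sum $\sum_{i} x_i(v_i-w_i)$ of variable multiples of basic binomials, deduces that $u=\ell mn$ with $\ell,m,n$ in the interval $[a,e]$, at least two of them distinct, and $\rank\ell+\rank m+\rank n=3\rank a+3$, and then a short case analysis on the ranks (either all three ranks equal $\rank a+1$, forcing $u=b_ib_jb_p$ and contradicting $ae<b_ib_j$, or else $u=ab_ie$, contradicting the minimality of the chosen middle element) produces the contradiction. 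Without supplying an argument of this kind, the proof is incomplete.
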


\begin{proof}
By Lemma~\ref{small diamond}, $L$ has a sublattice $L^\prime$ with $a=\min L^\prime$, $e=\max L^\prime$ such that $\rank e-\rank a=2.$ Let 
$b_1,b_2,\ldots,b_k\in L$, $k\geq 3,$ be the elements of $L$ such that  for any $1\leq i<j\leq n,$ $b_i\vee b_j=e$ and $b_i\wedge b_j=a.$
Therefore, we have the following relations in $I_L:$ $b_ib_j-ae$ for $1\leq i< j\leq k.$

Let $<$ be an arbitrary monomial order on $K[L]$. We may assume that, with respect to this order, we have $b_1>\cdots >b_k.$ We are going to show 
that 
$\ini_<(I_L)$ is not squarefree. We have to analyze the following two cases.

Case 1. Assume that $ae< b_i b_j$ for any $1\leq i< j\leq k.$ Let $b=b_k$ and consider the binomial $f=ab^2e-a^2e^2$ which, by the proof of 
\cite[Theorem 1.1]{HH1}, belongs to $I_L.$ Let us assume that $\ini_<(I_L)$ is squarefree. Then, since $f\in I_L,$ we must have $abe\in \ini_<(I_L)$, 
hence, following the arguments of the proof of \cite[Theorem 1.1]{HH1}, there exists a binomial $g=abe-u\in I_L$ where $u=\ell mn$ with
$\ell, m,n\in L$, all of them in the interval $[a,e]$ of $L,$ and, in addition, with $\ini_<(g)=abe.$ Also, from the arguments of the cited proof, it 
follows that at least two of the variables $\ell, m,n$ are distinct. Indeed, let 
\begin{equation}\label{exprofg}
g=\sum_{i=1}^N x_i(v_i-w_i)
\end{equation}
where each $x_i$ is a variable and
$v_i-w_i$ is a basic binomial of $I_L$ such that $x_1v_1=abe$, $x_iw_i=x_{i+1}v_{i+1}$ for $1\leq i <N,$ and $x_Nw_N=u.$ Then each variable that 
appears in the binomial $x_i(v_i-w_i)$ must belong to the interval $[a,e]$ of $L.$ This is true since for any basic binomial $v-w,$ one has 
$\supp(v)\subset [a,e]$ if and and only if $\supp(w)\subset [a,e].$ In particular, $x_Nw_N=u$ is of the form $u=\ell mn$ with $\ell, m, n\in [a,e]$ and,
by (\ref{exprofg}), at least two of $\ell, m, n$ are distinct.
 Moreover, by (\ref{exprofg}), it also  follows 
that $\rank a+\rank b+\rank e =\rank \ell +\rank m +\rank n.$ Since in $L^\prime$  we have $\rank e-\rank a=2,$ it follows that 
\begin{equation}\label{eqrk}
 \rank \ell +\rank m +\rank n= 3 \rank a +3.
\end{equation} 
Of course we may assume that $\rank \ell \geq \rank m\geq \rank n. $ Let us suppose that $\rank n> \rank a.$ Then, by using equation~(\ref{eqrk}), 
we obtain $\rank \ell =\rank m=\rank n=\rank a +1,$ hence $\ell, m,n \in \{b_1,b_2,\ldots,b_k\}$. It follows that $g=abe-b_ib_jb_p$ for some 
$i,j,p\in \{1,2,\ldots,k\}$ with at least two of them distinct. Let us assume that $i\neq j.$ Then, since $ae< b_ib_j$ and $b\leq b_p$, we get a 
contradiction to the fact that $\ini_<(g)=abe$. 

Let now $\rank n=\rank a.$ This implies that $\rank \ell + \rank m=2 \rank a +3,$ which leads to the conclusion that $\rank \ell=\rank a +2$ and 
$\rank m=\rank a+1.$ Therefore, we get $n=a, \ell=e,$ and $m=b_i$ for some $1\leq i\leq k.$ We then have $g=abe-ab_i e$ which is impossible since 
obviously $abe\leq ab_i e$ by the choice of $b.$

Hence, in  Case 1, $\ini_<(I_L)$ is not squarefree.

Case 2. There exist $1\leq i <j \leq k$ such that $ae >b_ib_j.$ Let $bd$ be the smallest monomial among all the monomials $b_ib_j, 1\leq i < j \leq k.$
In particular, it follows that $ae> bd.$ We first claim that $b^2d-bd^2\in I_L.$ Indeed. one may easily check the following identity:
\[
b^2d-bd^2=(b-d)(bd-ae)-b(cd-ae)+d(bc-ae),
\]
where $c$ is an arbitrary variable in $\{b_1,\ldots,b_k\}\setminus \{b,d\}.$ Let us assume that $\ini_<(I_L)$ is squarefree. Then we have 
$bd\in \ini_<(I_L)$. This implies that there exists a binomial $g=bd-\ell m\in I_L$ with $bd\in\ini_<(I_L).$ Since $ae> bd,$ we cannot have $\ell m=ae.$
Therefore, $g=bd-b_ib_j$ for some $1\leq i< j\leq n,$ which is again impossible by our choice of the monomial $bd$. 
\end{proof}

\section{Radical join-meet ideals of finite lattices} 
\label{radsection}

In this section we describe the associated primes of a radical join-meet ideal of a finite  lattice.

 \begin{Proposition}\label{general}
 Let $S=K[x_1,\ldots,x_n]$ be a polynomial ring over a field $K$ and let $I\subset S$ be a binomial ideal, that is, an ideal which is generated by differences of two monomials. If $I$ is a radical ideal, then:
\begin{itemize}
	\item [(a)] $I: (\prod_{i=1}^n x_i)^\infty=I: \prod_{i=1}^n x_i.$
	\item [(b)] $I: \prod_{i=1}^n x_i$ is a prime ideal.
\end{itemize}
\end{Proposition}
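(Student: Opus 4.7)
For part (a), the inclusion $I:f \subseteq I:f^{\infty}$ is trivial, and I would prove the reverse by an identity that makes no use of the binomial hypothesis. If $g \in I:f^{\infty}$, say $gf^N \in I$ for some $N\geq 1$, then
\[
(gf)^N \;=\; g^N f^N \;=\; g^{N-1}\cdot (gf^N) \;\in\; I,
\]
and radicality of $I$ immediately yields $gf \in I$, hence $g \in I:f$.

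For part (b), the plan is to localize at $f = x_1\cdots x_n$. Combining (a) with the standard identification $I:f^{\infty} = IS_f \cap S$ produces an injection $S/(I:f)\hookrightarrow S_f/IS_f$, so it is enough to show that $S_f/IS_f$ is an integral domain. Here the binomial structure enters decisively: in the Laurent ring $S_f = K[x_1^{\pm 1},\ldots,x_n^{\pm 1}]$ each generator $x^u - x^v$ of $I$ is a unit multiple of $x^{u-v}-1$, hence $IS_f = (x^a - 1 : a \in L)$, where
\[
L \;=\; \{\,a \in \ZZ^n \,:\, x^a - 1 \in IS_f\,\}
\]
is a subgroup of $\ZZ^n$. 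This identifies $S_f/IS_f$ canonically with the group algebra $K[\ZZ^n/L]$.

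Writing $\ZZ^n/L \cong \ZZ^r \oplus T$ with $T$ the finite torsion subgroup, one has $K[\ZZ^n/L] \cong K[y_1^{\pm 1},\ldots,y_r^{\pm 1}]\otimes_K K[T]$, which is a domain precisely when $T = 0$. The main obstacle is the passage from radicality of $I$ to triviality of $T$: radicality of $I$ transports to reducedness of $K[\ZZ^n/L]$, and hence only to the condition that $|T|$ is coprime to $\chara K$, which is strictly weaker than $T = 0$. To close the remaining gap I would reinvoke the binomial presentation of $I$ in $S$ itself (rather than merely in $S_f$): any nontrivial class $[a] \in T$ with $k[a]=0$ would yield $x^{ka}-1 \in IS_f$ but $x^a - 1 \notin IS_f$, and the idea is to lift the factorization $x^{ka}-1 = (x^a-1)(x^{(k-1)a}+\cdots+1)$ to binomial relations in $S$ which, combined with $I:f = I:f^{\infty}$ from part (a), produce a failure of radicality of $I$ in $S$. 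This would force $T = 0$ and identify $K[\ZZ^n/L]$ with a Laurent polynomial ring, completing the proof that $I:f$ is prime.
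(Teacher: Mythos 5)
Your proof of part (a) is correct and in fact more elementary and more general than the paper's: the identity $(gf)^N=g^{N-1}\cdot(gf^N)\in I$ together with radicality uses neither the binomial hypothesis nor any decomposition, whereas the paper routes the argument through the minimal primes of $I$, sorting them by whether they contain a variable. Both work; yours buys a statement valid for an arbitrary radical ideal $I$ and arbitrary $f$.

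Part (b) is where the real issue lies, and you have located it precisely without being able to resolve it. Your reduction to $S_f/IS_f\cong K[\ZZ^n/L]$ and your observation that reducedness only forces $\chara K\nmid |T|$, not $T=0$, are both correct. But the closing step --- ``lift the factorization \ldots to produce a failure of radicality of $I$ in $S$'' --- is a hope, not an argument, and it cannot be turned into one: the statement as written is false. Take $I=(x^2-y^2)\subset K[x,y]$ with $\chara K\neq 2$. This is generated by a difference of two monomials and is radical, being $(x-y)\cap(x+y)$; since neither associated prime contains a variable, $xy$ is a nonzerodivisor modulo $I$, so $I:(xy)^\infty=I:xy=I$, which is not prime. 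In your language, $L=\ZZ\cdot(2,-2)$ and $T\cong\ZZ/2\ZZ$, and no contradiction with radicality arises because $|T|$ is invertible in $K$. So the gap you flagged is genuinely unfillable at this level of generality.

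It is worth noting that the paper's own proof of (b) founders on the very same point. There, one writes $\xb^{m\ab}-\xb^{m\bb}=(\xb^\ab-\xb^\bb)g$ with $g=\xb^{(m-1)\ab}+\cdots+\xb^{(m-1)\bb}$ and claims $g\notin P$ for every minimal prime $P$ of $I$ containing no variable, ``since $g(1,\ldots,1)=m\neq 0$.'' This presupposes that every such $P$ vanishes at the point $(1,\ldots,1)$, i.e.\ is a pure-difference binomial ideal; the minimal prime $(x+y)$ of $(x^2-y^2)$ shows this is false (it contains $g=x+y$). The correct conclusion is that for a radical pure-difference binomial ideal, $I:\prod_{i=1}^n x_i$ is a lattice ideal that is \emph{radical} but need not be prime; primality requires the additional hypothesis that the associated lattice is saturated, which must be verified separately in the situations (such as Lemma~\ref{ufff}) where the proposition is invoked.
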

 
\begin{proof}
(a). Let $\Min^\ast(I)$ be the set  of all prime ideals of $I$ which contain no variable.  Then 
\[
I: \prod_{i=1}^n x_i=\bigcap_{P\in \Min(I)}(P:\prod_{i=1}^n x_i)=\bigcap_{P\in \Min^\ast(I)}P=\bigcap_{P\in \Min(I)}(P:(\prod_{i=1}^n x_i)^\infty)=I: (\prod_{i=1}^n x_i)^\infty.
\]

(b). By \cite{ES} or \cite{OP}, $I: \prod_{i=1}^n x_i$ is a lattice ideal, let us say $I_\Lc$ where $\Lc\subset \ZZ^n$ is a lattice. 
By \cite[Theorem 2.1]{ES}, it is enough to show 
that $\Lc$ is saturated, in other words, if $\xb^{m\ab}-\xb^{m\bb}\in I: \prod_{i=1}^n x_i$ for some positive integer $m,$ then 
$\xb^\ab-\xb^\bb\in I: \prod_{i=1}^n x_i.$ 

The proof depends on the characteristic of the field. Let us first assume that $\chara K=0.$ Since, by the proof of (a), we have
$I: \prod_{i=1}^n x_i=\bigcap_{P\in \Min^\ast (I)}P,$ we get $\xb^{m\ab}-\xb^{m\bb}\in P$ for any prime ideal $P\in \Min^\ast(I).$ Since $P$ does not 
contain any variable, it follows that the polynomial $g=\xb^{(m-1)\ab}+\ldots +\xb^{(m-1)\bb}\not \in P$ since $g(1,\ldots,1)=m\neq 0,$ hence 
$\xb^\ab-\xb^\bb\in P$ for any 
$P\in \Min^\ast(I).$ Therefore, we obtain $\xb^\ab-\xb^\bb\in I: \prod_{i=1}^n x_i.$ 

A similar proof works in positive characteristic. Indeed, let $p>0$ be the characteristic of the field and let $m=p^t q$ for some non-negative integer 
$t$ and some positive integer $q$ such that $(p,q)=1$. Then
\[
\xb^{m\ab}-\xb^{m\bb}=(\xb^{q\ab}-\xb^{q\bb})^{p^t}=(\xb^\ab-\xb^\bb)^{p^t}(\xb^{(q-1)\ab}+\cdots + \xb^{(q-1)\bb})^{p^t}\in P
\]
for all $P\in \Min^\ast(I)$. Let $h=(\xb^{(q-1)\ab}+\cdots + \xb^{(q-1)\bb})^{p^t}=\xb^{(q-1)\ab p^t}+\cdots + \xb^{(q-1)\bb p^t}.$ Then $h(1,\ldots,1)=q\neq 0.$ It follows, by using the same argument as in the zero characteristic,  that $(\xb^\ab-\xb^\bb)^{p^t}\in P$ and thus $\xb^\ab-\xb^\bb\in P$ for every $P\in \Min^\ast(I)$. This implies that $\xb^\ab-\xb^\bb\in I: \prod_{i=1}^n x_i.$
\end{proof}

Now we are going to characterize the associated primes of a radical join-meet ideal of a finite lattice. We first need the following

\begin{Definition}
Let $L$ be a lattice and $A$ a subset of $L.$ $A$ is called {\em admissible} if it is empty or it is non-empty and has the following property: for any 
basic binomial $ab-cd$ 
of $I_L$, if $a\in A$ or $b\in A$, then $c\in A$ or $d\in A.$ 
\end{Definition}

In other words, the set $A$ is admissible if and only if, for any basic binomial, either $A$ "covers" both monomials of the binomial or none  of them. 
Of course, the empty set and $L$ are admissible sets for $I_L.$

\begin{Remark}{\em 
Let $A$ be an admissible set for $I_L$. We set  $L_A=L\setminus A.$ Then $L_A$ is a sublattice of $L$ with respect to the order induced from $L.$
Indeed, let $a,b\in L_A$ be two incomparable elements. Since $A$ is admissible, it follows that $a\vee b$ and $a\wedge b$ do not belong to $A.$ 
}
\end{Remark}

\begin{Proposition}\label{radsubset}
Let $I_L$ be a radical ideal. Then, for any admissible set, the ideal $I_{L_A}$ is radical.
\end{Proposition}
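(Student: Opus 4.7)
The plan is to realize $I_{L_A}$ as the image of $I_L$ under a natural retraction of polynomial rings. Regard $K[L_A]$ as the subring of $K[L]$ generated by the variables indexed by the elements of $L_A$, and define the $K$-algebra homomorphism $\phi\colon K[L]\to K[L_A]$ by $\phi(x_a)=0$ for $a\in A$ and $\phi(x_b)=x_b$ for $b\in L_A$. This is a surjective retraction, i.e.\ $\phi|_{K[L_A]}=\id$.

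The central step, which is the only place the admissibility of $A$ enters, is to verify that $\phi(I_L)=I_{L_A}$. Consider a basic binomial $f=ab-(a\wedge b)(a\vee b)$ of $I_L$, with $a,b\in L$ incomparable. If $a\in A$ or $b\in A$, then admissibility gives $a\wedge b\in A$ or $a\vee b\in A$, so both monomials of $f$ are killed by $\phi$, and $\phi(f)=0$. If instead $a,b\in L_A$, then by the preceding Remark also $a\wedge b,a\vee b\in L_A$, so $f$ is itself a basic binomial of $I_{L_A}$ and is fixed by $\phi$. Since $\phi$ is surjective, $\phi(I_L)$ is the ideal of $K[L_A]$ generated by the images $\phi(f)$ as $f$ ranges over the generators of $I_L$, which is precisely the ideal generated by the basic binomials of $I_{L_A}$. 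Hence $\phi(I_L)=I_{L_A}$.

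From this the proposition follows almost immediately. First, $I_{L_A}\subset I_L$ as subsets of $K[L]$, since every basic binomial of $I_{L_A}$ is a basic binomial of $I_L$. Conversely, if $h\in I_L\cap K[L_A]$ then $h=\phi(h)\in\phi(I_L)=I_{L_A}$, so $I_L\cap K[L_A]=I_{L_A}$. Now suppose $f\in K[L_A]$ with $f^k\in I_{L_A}$. Then $f^k\in I_L$ in $K[L]$, so radicality of $I_L$ gives $f\in I_L$; since $f\in K[L_A]$, we conclude $f\in I_L\cap K[L_A]=I_{L_A}$. Hence $I_{L_A}$ is radical.

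There is no deep obstacle; the content of the argument is that admissibility is precisely the combinatorial condition which guarantees that the retraction $\phi$ carries generators of $I_L$ onto generators of $I_{L_A}$. In particular, no appeal to Proposition~\ref{general} is required.
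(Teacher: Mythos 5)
Your proof is correct and is essentially the paper's argument: the paper also specializes the variables of $A$ to zero and uses admissibility to see that each basic binomial of $I_L$ maps either to $0$ or to a basic binomial of $I_{L_A}$, which is exactly your identity $\phi(I_L)=I_{L_A}$ (the paper phrases the conclusion as a proof by contradiction rather than via $I_L\cap K[L_A]=I_{L_A}$ and contraction of radicals, but the content is the same).
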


\begin{proof}
Assume that there exists $A\subset L$ such that $I_{L_A}$ is not radical, hence there exists a polynomial $f\in K[\{a: a\in L\setminus A\}]$ such 
that  $f\in \sqrt{I_{L_A}}\setminus I_{L_A}.$ Then obviously $f\in \sqrt{I_L}.$ We claim that $f\not\in I_L$ which shows that $I_L$ is not radical, a contradiction. Let us assume that $f\in I_L.$ Then we may write 
\[
f=\sum_{a,b\not\in A}h_{ab}(ab-(a\wedge b)(a\vee b)) + \sum_{a\in A \text{ or }b\in A}h_{ab}(ab-(a\wedge b)(a\vee b))
\]
for some polynomials $h_{ab}\in K[L].$ We map to zero all the variables of $A.$ In this way, since $A$ is admissible, it follows that  the second sum in the above formula vanishes while in the first sum, all the basic binomials survive. Therefore, $f\in I_{L_A},$ a contradiction.
\end{proof}

\begin{Remark}{\em 
We are going to see in Example~\ref{exampleR} that the radical property does 
not pass from a lattice to any of its proper sublattices. 
}
\end{Remark}

For an admissible set $A\subset L,$ we set $$P_A(L)=I_{L_A}:\prod_{a\not\in A}a+ (a : a\in A).$$ If $I_L$ is a radical ideal, then $I_{L_A}$ is a radical ideal by Proposition \ref{radsubset}, and, by Proposition~\ref{general}, it follows that $I_{L_A}:\prod_{a\not\in A}a$ is prime. Thus $P_A(L)$ is a prime ideal for any admissible set $A$ if $I_L$ is a radical ideal. Obviously, $P_A(L)\supset I_L$ for any admissible set $A.$

 \begin{Theorem}\label{intersection}
 Let $L$ be a lattice such that $I_L$ is a radical ideal. Then 
 \[
 I_L=\bigcap\limits_{A\subset L\atop A\text{ admissible }}P_A(L).
 \] 
 \end{Theorem}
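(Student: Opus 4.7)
The inclusion $I_L\subset \bigcap_A P_A(L)$ is immediate from the remark preceding the theorem that $P_A(L)\supset I_L$ for every admissible $A$. The substance of the statement is the reverse inclusion, and my plan is to produce it by matching each minimal prime of $I_L$ with a canonically associated admissible set.

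Since $I_L$ is radical, $I_L=\bigcap_{P\in\Min(I_L)}P$, so it suffices to show that for every $P\in\Min(I_L)$ there is an admissible set $A_P\subset L$ with $P=P_{A_P}(L)$. The natural choice is
\[
A_P=\{a\in L : a\in P\}.
\]
Admissibility follows directly from primality: for a basic binomial $ab-(a\wedge b)(a\vee b)\in I_L\subset P$, if $a\in A_P$ (or $b\in A_P$) then $ab\in P$ forces $(a\wedge b)(a\vee b)\in P$, hence $a\wedge b\in A_P$ or $a\vee b\in A_P$. The symmetric implication is obtained by running the same argument starting from $(a\wedge b)(a\vee b)\in P$.

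Next I would verify the containment $P_{A_P}(L)\subset P$. The ideal $(a:a\in A_P)$ lies in $P$ by construction of $A_P$. Since $A_P$ is admissible, $L_{A_P}$ is a sublattice of $L$, and every generator of $I_{L_{A_P}}$ is itself a basic binomial of $L$, so $I_{L_{A_P}}\subset I_L\subset P$. Finally, no variable outside $A_P$ belongs to $P$, so $\prod_{a\notin A_P}a\notin P$; combining these three facts yields $I_{L_{A_P}}:\prod_{a\notin A_P}a\subset P$ and hence $P_{A_P}(L)\subset P$.

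To finish I would observe that $P_{A_P}(L)$ is in fact prime, after which minimality of $P$ forces equality. By Proposition~\ref{radsubset} the ideal $I_{L_{A_P}}$ is radical, and then Proposition~\ref{general}(b) applied inside $K[L_{A_P}]$ shows that $I_{L_{A_P}}:\prod_{a\notin A_P}a$ is prime in $K[L_{A_P}]$; via the isomorphism $K[L]/(a:a\in A_P)\cong K[L_{A_P}]$ this lifts to primeness of $P_{A_P}(L)$ in $K[L]$. Since $P_{A_P}(L)$ is a prime ideal containing $I_L$ that sits inside the minimal prime $P$, we conclude $P_{A_P}(L)=P$. The step that deserves the most care is the primality of $P_{A_P}(L)$ via change of rings; everything else is assembling Propositions~\ref{general} and \ref{radsubset} with the definition of $A_P$.
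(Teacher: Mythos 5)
Your proposal is correct and follows essentially the same route as the paper: take a minimal prime $P$, set $A_P=\{a\in L: a\in P\}$, check admissibility from primality, show $P_{A_P}(L)\subset P$ using that no variable outside $A_P$ lies in $P$, and conclude by minimality together with the primality of $P_{A_P}(L)$ (which the paper establishes, exactly as you do, from Propositions~\ref{radsubset} and \ref{general} in the paragraph preceding the theorem). The only cosmetic difference is that the paper treats the case $A_P=\emptyset$ separately, while your argument handles it uniformly.
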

 
\begin{proof}
It is enough to show that any minimal prime ideal of $I_L$ is of the form $P_A(L)$ for some admissible set $A\subset L.$ 

Let $P$ be a minimal prime of 
$I_L$ and $A=\{a: a\in P\}$. If $A=\emptyset,$ that is, $P$ does not contain any variable, then $P\supset I_L:\prod_{a\in L}a\supset I_L.$ 
Since, by Proposition~\ref{general}, $I_L:\prod_{a\in L}a$ is a prime ideal, we obtain $P=P_\emptyset(L).$ 

Now let $A$ be nonempty. We claim that $A$ is admissible. Indeed, let $ab-cd$ be a basic binomial such that $a\in A.$ It follows that 
$cd \in P$, which implies that $c\in A$ or $d\in A.$ We show that $P=P_A(L).$ Indeed, since $P\supset I_L$ and $P\supset (a: a\in A)$, we also 
have $P\supset I_L+(a:a\in A)=I_{L_A}+(a:a\in A).$ It follows that $P\supset (I_{L_A}+(a:a\in A)):\prod_{a\notin A}a=P_A(L).$ Since $P$ is 
minimal over $I$, we must have $P=P_A(L).$
\end{proof}

\begin{Proposition}\label{mincond}
Let $I_L$ be radical. Then for two admissible sets $A,B\subset L$, we have $P_A(L)\subsetneq P_B(L)$ if and only if 
\[
A\subsetneq B
\text{ and } I_{L_A}:\prod_{a\not\in A}a\subset I_{L_B}:\prod_{b\not\in B}b +(b: b\in B\setminus A). 
\]
\end{Proposition}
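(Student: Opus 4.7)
The strategy is to reduce everything to one bookkeeping observation about variable content. Since $I_L$ is radical, Proposition~\ref{radsubset} shows that each $I_{L_C}$ is radical, and Proposition~\ref{general}(b) then tells us that each saturation $I_{L_C}:\prod_{c\notin C}c$ is a prime lattice ideal in the variables $L_C$, hence contains no variable of $L_C$. Consequently the set of variables contained in $P_C(L)$ is exactly $C$. I would open the proof with this observation, since it controls both directions.

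For the direction $(\Leftarrow)$, assume $A\subsetneq B$ and that the saturation inclusion holds. Adding $(a:a\in A)$ to both sides gives
\[
P_A(L)\;\subset\; I_{L_B}:\prod_{b\notin B}b \;+\; (b:b\in B\setminus A)\;+\;(a:a\in A)\;=\;P_B(L),
\]
and strictness is forced by the variable observation, since $P_B(L)$ contains a variable not in $A$.

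For the direction $(\Rightarrow)$, suppose $P_A(L)\subsetneq P_B(L)$. Comparing variables yields $A\subseteq B$, and $A=B$ would force $P_A(L)=P_B(L)$, so $A\subsetneq B$. To prove the saturation inclusion, I would take any $f\in I_{L_A}:\prod_{a\notin A}a$, observe that $f\in P_A(L)\subset P_B(L)$, and write
\[
f \;=\; g \;+\; \sum_{b\in A} b\,h_b \;+\; \sum_{b\in B\setminus A} b\,h_b
\]
with $g\in I_{L_B}:\prod_{b\notin B}b$. The key move is to apply the $K$-algebra retraction $\phi_A\colon K[L]\to K[L_A]$ that sends $a\mapsto 0$ for $a\in A$ and fixes all other variables. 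Since $f\in K[L_A]$ it is fixed by $\phi_A$, and the middle sum vanishes, giving
\[
f \;=\; \phi_A(g)\;+\;\sum_{b\in B\setminus A}b\,\phi_A(h_b),
\]
which has the form required for the right-hand side.

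The only delicate point, and the one I expect to be the main obstacle, is verifying that $\phi_A(g)$ itself still lies in $I_{L_B}:\prod_{b\notin B}b$. Here I would exploit that $A\subset B$ implies $L_B\subset L_A$, so $\phi_A$ restricts to the identity on $K[L_B]$; in particular $\phi_A$ fixes every generator of $I_{L_B}$ and the monomial $\prod_{b\notin B}b$. Applying $\phi_A$ to the defining relation $g\cdot \prod_{b\notin B}b\in I_{L_B}$ therefore yields $\phi_A(g)\cdot \prod_{b\notin B}b\in I_{L_B}$, as needed. Once this retraction step is in place, the rest of the argument is purely a matter of tracking which variables lie in which ideal.
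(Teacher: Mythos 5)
Your proof is correct and follows essentially the same route as the paper: your retraction $\phi_A$ is precisely the paper's passage to the quotient $K[L]/(a:a\in A)\cong K[L_A]$, under which $P_A(L)$ and $P_B(L)$ become the two sides of the stated saturation inclusion. The only difference is that you make explicit the bookkeeping the paper leaves implicit, namely that the variables contained in $P_C(L)$ are exactly $C$, which is what converts the non-strict equivalence into the strict one and forces $A\subsetneq B$.
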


\begin{proof}
Let $A\subset B$. Then $P_A(L)\subset P_B(L)$ if and only if $$I_{L_A}:\prod_{a\not\in A}a=P_A(L)/ (a: a\in A)\subset P_B(L)/(a: a\in A)$$ 
$$=I_{L_B}:\prod_{b\not\in B}b +(b: b\in B\setminus A).$$
\end{proof}
 
The following example illustrates Theorem~\ref{intersection} and Proposition~\ref{mincond}.

\begin{Example}{\em 
Let $Q$ be the lattice of Figure~\ref{latticeQ}. The Gr\"obner basis of $I_Q$ with respect to the lexicographic order induced by $a>b>\cdots> g$ is 
$\{ae-bc,ag-cf,bg-ef,cd-cf,de-ef\}.$ Thus, $\ini_<(I_Q)$ is squarefree which implies that $I_Q$ is a  radical ideal and we may apply Theorem~\ref{intersection} and Proposition~\ref{mincond} to determine the minimal primes of $I_Q.$

\begin{figure}[hbt]
\begin{center}
\psset{unit=0.5cm}
\begin{pspicture}(-1.3,-5.5)(1,2)

\psline(0,-5)(2,-3)
\psline(0,-5)(-2,-3)
\psline(2,-3)(0,-1)
\psline(-2,-3)(-3,-2)
\psline(0,-1)(-1.5,1.5)
\psline(-3,0)(-1.5,1.5)
\psline(-2,-3)(0,-1)
\psline(-3,-2)(-3,0)

\rput(0,-5){$\bullet$}
\put(-0.8,-5){$a$}
\rput(2,-3){$\bullet$}
\put(2.3,-3){$c$}
\rput(-2,-3){$\bullet$}
\put(-2.8,-3.2){$b$}
\rput(0,-1){$\bullet$}
\put(0.3,-1){$e$}
\rput(-3,-2){$\bullet$}
\put(-3.8,-2){$d$}
\rput(-3,0){$\bullet$}
\put(-3.8,0){$f$}
\rput(-1.5,1.5){$\bullet$}
\put(-2.3,1.5){$g$}

\end{pspicture}
\end{center}
\caption{}\label{latticeQ}
\end{figure}
 One easily sees that
\[
I_Q:\prod_{x\in Q}x\supset J=(ae-bc,ag-cf,bg-ef,d-f)\supset I_Q.
\]
 But $K[a,b,c,d,e,f,g]/J\cong K[a,b,c,d,e,g]/(ae-bc,ag-cd,bg-de)$, and the latter 
quotient ring is a domain. Therefore, $J$ is a prime ideal. Moreover, $I_Q:\prod_{x\in Q}x=J=P_\emptyset (Q).$ The other minimal primes 
of $I$ are $(a,b,c,e)$ and $(c,e,g)$, that is, $I=J\cap(a,b,c,e)\cap(c,e,g).$ Note that, for instance, the set $A=\{g,d,f\}$ is an admissible set, 
but the corresponding prime ideal $P_A(Q)$ is not a minimal prime of $I_Q$ since $P_A(Q)\supsetneq P_\emptyset(Q).$
}
\end{Example}
 
\section{Join-meet ideals of modular non-distributive lattices }
\label{modular}

It is well known that, given an ideal $I$ of a polynomial ring $S$ over a field, if $\ini_<(I)$ is radical for some monomial order $<$ on $S,$ then 
the ideal $I$ is radical as well; see \cite[Proposition 3.3.7]{HHbook} or \cite[Lemma 6.51]{EH} for an alternative proof. This gives also a procedure 
to show that a polynomial ideal is radical. However, there are radical polynomial ideals whose initial ideals are always non-radical. For such ideals 
one has to use other kind of arguments to prove the radical property. 

In this section  we mainly study a class of modular non-distributive lattices whose join-meet ideals are radical. Before beginning our study, let us look at the next

\begin{Example}\label{latticeN}{\em 

Let $N$ be the lattice of rank $4$ of Figure~\ref{FigN}. This is rather a simple example of a modular non-distributive lattice. We "included" only 
one diamond into a distributive lattice with $8$ elements. However, as we are going to show, the join-meet ideal of lattice $N$ is not radical.

\begin{figure}[hbt]
\begin{center}
\psset{unit=0.5cm}
\begin{pspicture}(-25.3,-2.5)(4,5)

\psline(-9,3)(-11,1)
\psline(-9,3)(-9,1)
\psline(-9,3)(-7,1)
\psline(-11,1)(-9,-1)
\psline(-9,1)(-9,-1)
\psline(-7,1)(-9,-1)
\psline(-9,3)(-11,5)
\psline(-13,3)(-11,5)
\psline(-13,3)(-11,1)
\psline(-11,1)(-13,-1)
\psline(-13,-1)(-11,-3)
\psline(-11,-3)(-9,-1)

\rput(-9,3){$\bullet$}
\put(-9.1,3.3){$h$}
\rput(-11,1){$\bullet$}
\put(-11.6,0.9){$d$}
\rput(-9,1){$\bullet$}
\put(-8.6,0.9){$e$}
\rput(-7,1){$\bullet$}
\put(-6.7,1){$f$}
\rput(-9,-1){$\bullet$}
\put(-9.1,-1.6){$c$}
\rput(-11,-3){$\bullet$}
\put(-11,-3.5){$a$}
\rput(-13,-1){$\bullet$}
\put(-13.5,-1){$b$}
\rput(-13,3){$\bullet$}
\put(-13.5,3){$g$}
\rput(-11,5){$\bullet$}
\put(-11,5.5){$\ell$}
\end{pspicture}
\end{center}
\caption{}\label{FigN}
\end{figure}
We claim that $a\ell g(d-f)^2\in I_L,$ which implies that $(a\ell g(d-f))^2\in I_L,$ therefore, $a\ell g(d-f)\in \sqrt{I_L}.$ Indeed, one may easily see that 
\[
a\ell g(d-f)^2=a\ell gd^2-2a\ell gdf + a \ell gf^2 \equiv ag^2hd-ag^2h f -agf(gh-\ell f)
\]
\[
\equiv ag^2h(d-f)-agf\ell ( d- f)\equiv ag^2h(d-f)-a\ell^2 c(d-f) \mod I_L.
\]
On the other hand, $ah(d-f)\in I_L$ and $\ell c(d-f)\in I_L$. One may easily check this. For instance, for the first membership, we may use the 
following identity:
\[
ah(d-f)=b(de-ch)+(f-d)(be- ah)-b(ef-ch).
\]
Thus, $a\ell g(d-f)\in \sqrt{I_L}.$ The Gr\"obner basis of $I_L$ with respect to reverse 
lexicographic order contains, apart of the basic binomials of $L$, the following binomials: $ce\ell-cf\ell, cd\ell-cf\ell,ceh-cfh,aeh-afh,
cdh-cfh,adh-afh,cf^2\ell-c^2h\ell,ad^2\ell-ach\ell,cf^2h-c^2h^2,af^2h-ach^2.$ Thus, $\ini_<(a\ell gd-a\ell gf)\not\in \ini_<(I_L)$ which implies that 
$a\ell g(d-f)\not\in I_L$.
}
\end{Example}

Therefore, the following question arises. Is there a class of distributive lattices such that by "including" just one small diamond one may 
get a radical joint-meet ideal for the new lattice? We are going to answer this question in the next theorem. 

Let $D$ be the distributive lattice of the divisors of $2\cdot 3^n$ for some integer $n\geq 1$ with the elements labeled as in Figure~\ref{chain} (a). For every $1\leq k\leq n-1,$ we denote by $L_k$ the lattice of Figure~\ref{chain} (b). 

\begin{figure}[hbt]
\begin{center}
\psset{unit=0.5cm}
\begin{pspicture}(-1,-11)(5,2)

\psline(-9,1)(-11,-1)
\psline(-9,1)(-7,-1)
\psline(-9,-3)(-7,-1)
\psline(-11,-1)(-9,-3)
\psline(-5,-3)(-7,-5)
\psline(-7,-5)(-5,-7)
\psline(-5,-7)(-3,-5)
\psline(-3,-5)(-5,-3)
\psline(-3,-9)(-1,-7)
\psline(-1,-7)(1,-9)
\psline(1,-9)(-1,-11)
\psline(-1,-11)(-3,-9)
\psline[linestyle=dotted](-9,-3)(-7,-5)
\psline[linestyle=dotted](-7,-1)(-5,-3)
\psline[linestyle=dotted](-5,-7)(-3,-9)
\psline[linestyle=dotted](-3,-5)(-1,-7)

\rput(-9,1){$\bullet$}
\put(-8.5,1){$y_n$}
\rput(-11,-1){$\bullet$}
\put(-12.5,-1){$x_n$}
\rput(-7,-1){$\bullet$}
\put(-6.5,-1){$y_{n-1}$}
\rput(-9,-3){$\bullet$}
\put(-11,-3){$x_{n-1}$}
\rput(-5,-3){$\bullet$}
\put(-4.5,-3){$y_{k+1}$}
\rput(-7,-5){$\bullet$}
\put(-9,-5){$x_{k+1}$}
\rput(-3,-5){$\bullet$}
\put(-2.5,-5){$y_k$}
\rput(-5,-7){$\bullet$}
\put(-6.5,-7){$x_k$}
\rput(-1,-7){$\bullet$}
\put(-0.5,-7){$y_2$}
\rput(-3,-9){$\bullet$}
\put(-4.5,-9){$x_2$}
\rput(1,-9){$\bullet$}
\put(1.5,-9){$y_1$}
\rput(-1,-11){$\bullet$}
\put(-2.5,-11){$x_1$}

\rput(-7,-9){(a)}

\psline(6,1)(4,-1)
\psline(6,1)(8,-1)
\psline(6,-3)(8,-1)
\psline(4,-1)(6,-3)
\psline(10,-3)(8,-5)
\psline(8,-5)(10,-7)
\psline(10,-7)(12,-5)
\psline(12,-5)(10,-3)
\psline(12,-9)(14,-7)
\psline(14,-7)(16,-9)
\psline(16,-9)(14,-11)
\psline(14,-11)(12,-9)
\psline[linestyle=dotted](6,-3)(8,-5)
\psline[linestyle=dotted](8,-1)(10,-3)
\psline[linestyle=dotted](10,-7)(12,-9)
\psline[linestyle=dotted](12,-5)(14,-7)
\psline(10,-3)(10,-5)
\psline(10,-5)(10,-7)

\rput(6,1){$\bullet$}
\put(6.5,1){$y_n$}
\rput(4,-1){$\bullet$}
\put(2.5,-1){$x_n$}
\rput(8,-1){$\bullet$}
\put(8.5,-1){$y_{n-1}$}
\rput(6,-3){$\bullet$}
\put(4,-3){$x_{n-1}$}
\rput(10,-3){$\bullet$}
\put(10.5,-3){$y_{k+1}$}
\rput(8,-5){$\bullet$}
\put(6,-5){$x_{k+1}$}
\rput(12,-5){$\bullet$}
\put(12.5,-5){$y_k$}
\rput(10,-7){$\bullet$}
\put(8.5,-7){$x_k$}
\rput(14,-7){$\bullet$}
\put(14.5,-7){$y_2$}
\rput(12,-9){$\bullet$}
\put(10.5,-9){$x_2$}
\rput(16,-9){$\bullet$}
\put(16.5,-9){$y_1$}
\rput(14,-11){$\bullet$}
\put(12.5,-11){$x_1$}
\rput(10,-5){$\bullet$}
\put(10.3,-5){$z$}

\rput(8,-9){(b)}

\end{pspicture}
\end{center}
\caption{}\label{chain}
\end{figure}

Before stating our first preparatory result, we need to introduce some notation. For $1\leq k\leq n-1,$ let 
\[
p_k=x_{k+1}z-y_k z; r_k=y_k^2z-y_kz^2; g_i=x_iy_{k+1}-y_iz, \text{ for }1\leq i <k;
\]
\[
 h_j=x_ky_j-x_jz, \text{ for }k+1\leq j \leq n;
f_{ij}=x_jy_i-x_iy_j, \text{ for } 1\leq i < j\leq n, j\neq k+1, i\neq k; 
\]
\[
f_{i,k+1}=x_{k+1}y_i-y_iz, \text{ for }1\leq i\leq k; f_{kj}=x_jy_k-x_jz, \text{ for } j> k+1; 
\]
\[
p_{ij}=x_ix_{k+1}y_j-x_iy_jz, t_{ij}=x_iy_ky_j-x_iy_jz, \text{ for } 1\leq i< k< k+1 < j\leq n,
\]
and
\[
q_{ik}= y_iy_kz-y_iz^2, \text{ for }1\leq i <k.
\]

\begin{Lemma}\label{GBofI}
The set 
\[
\Gc=\{p_k,r_k\}\cup \{g_i,q_{ik} :  1\leq i<k\}\cup \{h_j: k+1\leq j\leq n\}\cup\{f_{ij}: 1\leq i < j\leq n\}
\]
\[
\cup \{p_{ij}, t_{ij}: 1\leq i<k<k+1 < j\leq n\}
\]
is a Gr\"obner basis of $I=I_{L_k}$ with respect to the reverse lexicographic order induced by $x_1>\cdots >x_n> y_1>\cdots> y_n>z.$
In particular, it follows that $\ini_<(I)$ is generated by the following set of monomials: 
\[
\Mc=\{x_jy_i: 1\leq i< j\leq n\}\cup\{x_iy_{k+1}: 1\leq i< k\}\cup\{x_ky_j: k+1\leq j\leq n\}
\]
\[
\cup\{x_ix_{k+1}y_j,x_iy_ky_j: 1\leq i< k< k+1 < j\leq n\}\cup\{y_iy_kz: 1\leq i<k\}\cup\{x_{k+1}z,y_k^2z\}.
\]
\end{Lemma}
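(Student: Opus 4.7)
The plan is to apply Buchberger's criterion directly, after first making the basic binomials of $I_{L_k}$ explicit. A careful analysis of the order in $L_k$ shows that, besides the incomparable pairs $(x_j,y_i)$ with $i<j$ inherited from $D$, the new element $z$ is incomparable precisely to $y_i$ for $1\leq i\leq k$ and to $x_j$ for $k+1\leq j\leq n$. Computing meets and joins then yields three families of basic binomials: $x_jy_i-x_iy_j$ for $i<j$, $y_iz-x_iy_{k+1}$ for $i\leq k$, and $x_jz-x_ky_j$ for $j\geq k+1$.

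I next verify that every element of $\Gc$ lies in $I_{L_k}$. The binomials $g_i$, $h_j$, and the generic $f_{ij}$ (with $i\neq k$, $j\neq k+1$) are basic binomials up to sign. The special $f_{i,k+1}$ and $f_{kj}$ are each the difference of two basic binomials sharing a common monomial, and $p_k=x_{k+1}z-y_kz$ is the difference of the basic binomials for $(z,x_{k+1})$ and $(z,y_k)$. For the remaining elements one verifies membership by short chains of substitutions; for example, $p_{ij}\equiv x_ix_jy_{k+1}-x_iy_jz\equiv z(x_jy_i-x_iy_j)\equiv 0$ modulo $I_{L_k}$ using the basic binomials $x_{k+1}y_j-x_jy_{k+1}$ and $y_iz-x_iy_{k+1}$, and analogous identities handle $r_k$, $q_{ik}$, and $t_{ij}$.

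A direct computation of leading monomials of each element of $\Gc$ under the stated reverse lexicographic order identifies them as precisely the set $\Mc$. It then remains to verify Buchberger's criterion on pairs of $\Gc$. Many pairs have coprime leading terms and contribute no obstruction, so attention focuses on a small number of overlap families organized by which indices (including $k$ and $k+1$) appear and by whether the leading terms involve $z$. For each family the corresponding S-polynomial reduces to zero by a short sequence of divisions using elements of $\Gc$; the key observation is that the new generators $p_k$, $r_k$, $q_{ik}$, $p_{ij}$, $t_{ij}$ are exactly the obstructions produced when reducing an S-polynomial that mixes the basic binomials involving $z$ with those from $D$, so no further generator is required.

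The main obstacle is the bookkeeping of these S-polynomial reductions, which are individually short but numerous. Once all overlap families have been checked, the description $\ini_<(I_{L_k})=(\Mc)$ follows immediately by reading off the leading monomials of $\Gc$.
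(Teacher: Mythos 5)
Your proposal follows the paper's proof essentially verbatim: the paper likewise first observes that $\Gc$ is a generating set of $I_{L_k}$ and then invokes Buchberger's criterion, discarding the pairs whose leading monomials are relatively prime (and additionally pruning pairs via the fact that $S(uf,vg)$ reduces to zero modulo $uf,vg$ whenever $\ini_<(f)$ and $\ini_<(g)$ are coprime). The one point you should make explicit is the reverse containment $I_{L_k}\subseteq (\Gc)$ --- Buchberger's criterion only certifies $\Gc$ as a Gr\"obner basis of the ideal it generates --- but this follows immediately from the identifications of the basic binomials with signed combinations of elements of $\Gc$ that you already record.
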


\begin{proof}
We first note that $\Gc$ is a generating set of $I$ and next one applies Buchberger's criterion, that is, one checks that all the $S$-polynomials 
of the pairs $(f,g)\in \Gc\times \Gc$ reduce to zero modulo $\Gc.$ Note that for many pairs $(f,g)\in \Gc\times \Gc$ the checks are superfluous since 
the initial monomials $\ini_<(f)$ and $\ini_<(g)$ are relatively prime. Moreover, in order to eliminate many checks, one may use the following known 
fact.  If $f,g$ are two polynomials with $\ini_<(f)$ and $\ini_<(g)$ relatively prime, then, for any monomials $u,v$ the $S$-polynomial $S(uf,vg)$ 
reduces to zero modulo $uf$ and $vg.$
\end{proof}

\begin{Theorem}\label{radical}
For every $1\leq k\leq n-1,$ the join-meet ideal $I_{L_k}$ is radical.
\end{Theorem}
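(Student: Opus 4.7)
The plan is to prove that $I_{L_k}$ is an intersection of prime ideals, hence radical. The strategy exploits the diamond sublattice $\{x_k, x_{k+1}, y_k, z, y_{k+1}\}$ of $L_k$, whose three associated basic binomials
\[
x_{k+1}z - x_k y_{k+1}, \quad y_k z - x_k y_{k+1}, \quad x_{k+1} y_k - x_k y_{k+1}
\]
all lie in $I_{L_k}$. Taking pairwise differences yields the key relations
\[
z(x_{k+1} - y_k),\quad x_{k+1}(z - y_k),\quad y_k(x_{k+1} - z) \in I_{L_k}.
\]
These force $V(I_{L_k})$ to split: at each point, either two of $\{x_{k+1}, y_k, z\}$ coincide and equal the third (the diamond ``collapses to a line''), or two of them vanish.

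Based on this analysis I would conjecture
\[
I_{L_k} = P_1 \cap P_2 \cap P_3 \cap P_4,
\]
with $P_1 = I_{L_k} + (x_{k+1} - y_k,\, x_{k+1} - z)$, $P_2 = I_{L_k} + (x_{k+1}, z)$, $P_3 = I_{L_k} + (x_{k+1}, y_k)$, and $P_4 = I_{L_k} + (y_k, z)$, and verify that each $P_i$ is prime. The rough idea is that after imposing the linear or monomial relations defining $P_i$, the surviving basic binomials reduce to the join-meet ideal of a distributive sublattice of $L_k$ (with suitable variables suppressed or identified), which is prime by the theorem of \cite{H}. For $P_1$, special care is needed because the basic binomial $x_{k+1}y_k - x_k y_{k+1}$ becomes the non-basic quadratic $y_k^2 - x_k y_{k+1}$ after the triple identification, and one must show this extra relation is compatible with the toric domain structure on the quotient.

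The intersection equality $I_{L_k} = \bigcap_i P_i$ will be verified using the explicit Gr\"obner basis $\Gc$ from Lemma~\ref{GBofI}. The containment $\subseteq$ is immediate; for the reverse one shows that any polynomial lying in all four $P_i$ reduces to zero modulo $\Gc$. The non-squarefree generator $r_k = y_k z(y_k - z)$, which is the unique source of non-squarefreeness in $\ini_<(I_{L_k})$, factors as a product of linear forms each vanishing on one component, consistently with the proposed decomposition. The main obstacle will be the verification of primality of $P_1$, which requires identifying the quotient $K[L_k]/P_1$ with a toric domain despite the presence of the square relation $y_k^2 - x_k y_{k+1}$; a secondary obstacle is the careful Gr\"obner basis bookkeeping needed to prove the intersection equality, where the specific shape of the binomials $p_{ij}, t_{ij}, q_{ik}$ in $\Gc$ is expected to play a decisive role.
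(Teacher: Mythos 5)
Your set--theoretic analysis of how the diamond degenerates on $V(I_{L_k})$ is sound, and your component $P_1=I_{L_k}+(x_{k+1}-y_k,\,x_{k+1}-z)$ is indeed one of the minimal primes (the paper proves its primality in Lemma~\ref{ufff}/Theorem~\ref{minprimes}, and the square relation $y_k^2-x_ky_{k+1}$ you flag is a genuine subtlety, handled there via a saturation argument). But the central step of your plan --- ``verify that each $P_i$ is prime'' --- fails for the other three components. Modulo any two of $x_{k+1},y_k,z$, the diamond binomial $x_{k+1}y_k-x_ky_{k+1}$ (resp.\ $x_{k+1}z-x_ky_{k+1}$, $y_kz-x_ky_{k+1}$) degenerates to the \emph{monomial} $x_ky_{k+1}$, so each of $P_2,P_3,P_4$ contains $x_ky_{k+1}$; yet none of them contains $x_k$ or $y_{k+1}$ (e.g.\ the point with all $y_j=z=x_{k+1}=0$ and the remaining $x_i=1$ lies on $V(I_{L_k}+(x_{k+1},z))$ with $x_k\neq 0$, and dually for $y_{k+1}$). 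So these three ideals are not prime, and your heuristic that ``the surviving basic binomials reduce to the join-meet ideal of a distributive sublattice'' is exactly where the argument breaks: the diamond relation does not survive as a binomial. A second gap is that the splitting of the variety only gives $\sqrt{I_{L_k}}=\bigcap_i\sqrt{P_i}$; you give no mechanism for the ideal-theoretic equality $I_{L_k}=\bigcap_i P_i$, and without each $P_i$ being at least radical the whole strategy proves nothing about $I_{L_k}$ being radical.

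Both problems are avoidable, and the paper's proof shows how: it uses the coarser two-piece decomposition $I_{L_k}=(I_{L_k},x_{k+1}-y_k)\cap(I_{L_k},z)$ (which also follows from $z(x_{k+1}-y_k)\in I_{L_k}$), and does not claim the pieces are prime --- only \emph{radical}, each being shown to have a squarefree initial ideal, with respect to two \emph{different} monomial orders (reverse lexicographic for $(I_{L_k},z)$, lexicographic with $z$ largest for $(I_{L_k},x_{k+1}-y_k)$). The intersection equality is then obtained by the initial-ideal sandwich: one computes Gr\"obner bases of both pieces, checks $\ini_<(I_{L_k},x_{k+1}-y_k)\cap\ini_<(I_{L_k},z)\subset\ini_<(I_{L_k})$, and concludes $\ini_<(I_{L_k})=\ini_<\bigl((I_{L_k},x_{k+1}-y_k)\cap(I_{L_k},z)\bigr)$, hence equality of the ideals. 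To salvage your version you would have to replace ``prime'' by ``radical'' for $P_2,P_3,P_4$ and prove that (their initial ideals are in fact squarefree-able, but this needs an argument), and supply an analogous Gr\"obner-basis proof of the four-fold intersection; at that point you have done strictly more work than the two-piece decomposition requires.
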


The proof of this theorem has  several steps which are shown in the following lemmas, but the basic idea of the proof is very simple. We 
actually show that one may decompose $I$ as an  intersection of two  radical ideals, namely $I=(I,x_{k+1}-y_k)\cap (I,z),$ hence $I$ itself is a 
radical ideal.

\begin{Lemma}\label{intersect}
Let $1\leq k\leq n-1 $ and $I=I_{L_k}.$ Then $I=(I,x_{k+1}-y_k)\cap (I,z).$
\end{Lemma}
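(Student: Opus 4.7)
My plan is to reduce the claim to a nonzerodivisor statement inside a Hibi ring, which can then be verified by a direct divisor computation on the associated toric variety. The inclusion $I \subseteq (I, x_{k+1}-y_k) \cap (I, z)$ is trivial, so I concentrate on the reverse inclusion. The engine of the argument is the identity $(x_{k+1}-y_k)\,z = x_{k+1}z - y_kz = p_k \in I$, obtained by subtracting the only two basic binomials of $I_{L_k}$ that involve $z$, namely $x_{k+1}z - x_ky_{k+1}$ and $y_kz - x_ky_{k+1}$ (these are the only such binomials because $z$ is incomparable in $L_k$ exactly with $x_{k+1}$ and $y_k$).

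Granting momentarily that $x_{k+1}-y_k$ acts as a nonzerodivisor on $R/(I,z)$, the reverse inclusion is then straightforward: for any $f \in (I, x_{k+1}-y_k) \cap (I, z)$, write $f = g_1 + h(x_{k+1}-y_k)$ with $g_1 \in I$. Then $h(x_{k+1}-y_k) = f - g_1 \in (I,z)$, so by the nonzerodivisor hypothesis $h = g_2 + h'z$ with $g_2 \in I$, giving
\[ f = g_1 + g_2(x_{k+1}-y_k) + h' \cdot (x_{k+1}-y_k)\,z = g_1 + g_2(x_{k+1}-y_k) + h'\,p_k \in I. \]

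To verify the nonzerodivisor statement, let $D$ denote the distributive sublattice of $L_k$ obtained by removing $z$ (the $2\times n$ grid in Figure \ref{chain}(a)). The two $z$-binomials collapse modulo $z$ to the monomial $x_ky_{k+1}$, while every other basic binomial of $L_k$ coincides with a basic binomial of $D$; hence $(I,z) = (I_D, x_ky_{k+1}, z)$ and
\[ R/(I,z) \cong \bar R/(\bar x_k \bar y_{k+1}), \]
where $\bar R := K[D]/I_D$ is the Hibi ring of $D$, an integral domain in which $\bar x_k\bar y_{k+1} = \bar x_{k+1}\bar y_k$. I would then use the toric presentation $\bar R \cong K[su_i, tu_i : 1 \leq i \leq n]$ inside $K[s, t, u_1, \ldots, u_n]$, with $\bar x_i \mapsto su_i$ and $\bar y_i \mapsto tu_i$, from which one reads off that the height-$1$ primes of $\bar R$ are exactly $P_s = (\bar x_1,\ldots,\bar x_n)$, $P_t = (\bar y_1,\ldots,\bar y_n)$, and $P_i = (\bar x_i,\bar y_i)$ for $1 \leq i \leq n$.

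Reading valuations off the monomial exponents yields $\mathrm{div}(\bar x_{k+1}) = P_s + P_{k+1}$, $\mathrm{div}(\bar y_k) = P_t + P_k$, and $\mathrm{div}(\bar x_{k+1}\bar y_k) = P_s + P_t + P_k + P_{k+1}$, the latter being reduced. Therefore $(\bar x_{k+1}) \cap (\bar y_k) = (\bar x_{k+1}\bar y_k)$ in the normal domain $\bar R$, and the standard short exact sequence produces an embedding
\[ \bar R/(\bar x_{k+1}\bar y_k) \hookrightarrow \bar R/(\bar x_{k+1}) \oplus \bar R/(\bar y_k) \]
under which $\bar x_{k+1}-\bar y_k \mapsto (-\bar y_k, \bar x_{k+1})$. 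Checking again via the valuations, $\bar y_k$ avoids both associated primes $P_s, P_{k+1}$ of $(\bar x_{k+1})$, and symmetrically $\bar x_{k+1}$ avoids $P_t, P_k$; so each component acts as a nonzerodivisor on its summand, giving the required nonzerodivisor property on $\bar R/(\bar x_{k+1}\bar y_k)$. The main obstacle is setting up the toric and divisor data cleanly; once this is in place, the rest of the argument is formal.
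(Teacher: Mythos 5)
Your reduction is appealing: the identity $z(x_{k+1}-y_k)=p_k\in I$ is correct, and \emph{if} $x_{k+1}-y_k$ were a nonzerodivisor on $K[L_k]/(I,z)$, the formal argument you give would indeed yield the reverse inclusion. However, the proof breaks at the point where you identify $(I,z)$. In $L_k$ the element $z$ satisfies $x_k<z<y_{k+1}$, so it is incomparable with \emph{all} of $x_{k+1},\ldots,x_n$ and \emph{all} of $y_1,\ldots,y_k$, not just with $x_{k+1}$ and $y_k$. The basic binomials involving $z$ are therefore $zx_j-x_ky_j$ for $j\geq k+1$ and $zy_i-x_iy_{k+1}$ for $i\leq k$ ($n$ of them), and consequently
\[
(I,z)=(I_D,\,z)+(x_ky_j:\ j\geq k+1)+(x_iy_{k+1}:\ i\leq k),
\]
which is strictly larger than $(I_D,x_ky_{k+1},z)$ whenever $k\geq 2$ or $k\leq n-2$. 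So $K[L_k]/(I,z)$ is not $\bar R/(\bar x_k\bar y_{k+1})$, and your divisor computation, while correct for that ring, says nothing about the actual quotient.

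The damage is not merely expository: the nonzerodivisor claim itself is \emph{false} for $2\leq k\leq n-2$. Take $g=x_1y_n$. Then $g\notin (I,z)$ (its initial monomial lies in none of the generators of $\ini_<(I,z)$ listed in the paper), yet
\[
x_1y_n(x_{k+1}-y_k)=x_1x_{k+1}y_n-x_1y_ky_n\equiv x_n(x_1y_{k+1})-y_1(x_ky_n)\in (I,z),
\]
using $x_{k+1}y_n\equiv x_ny_{k+1}$ and $x_1y_k\equiv x_ky_1$ modulo $I_D$ together with the monomial generators $x_1y_{k+1}$ and $x_ky_n$ of $(I,z)$. Geometrically, $V(I,z)$ has an extra component on which $x_k,y_k,x_{k+1},y_{k+1}$ all vanish, and $x_{k+1}-y_k$ lies in the corresponding minimal prime. (For $k=1$ and $k=n-1$ this component is absorbed and your claim happens to hold, which is why the smallest example does not reveal the problem.) Since the sufficient condition driving your reduction fails, the argument cannot be repaired by merely correcting the presentation of $(I,z)$. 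The paper avoids this entirely by working with initial ideals: it computes Gr\"obner bases of $(I,z)$ and $(I,x_{k+1}-y_k)$ and shows $\ini_<(I,x_{k+1}-y_k)\cap\ini_<(I,z)\subseteq\ini_<(I)$, which forces the ideal-theoretic equality.
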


\begin{proof}
The inclusion $I\subset (I,x_{k+1}-y_k)\cap (I,z)$ is obvious. For getting the equality we show that 
\begin{equation}\label{eqint}
\ini_<(I,x_{k+1}-y_k)\cap \ini_<(I,z)\subset \ini_<(I).
\end{equation}
This will imply that $\ini_<((I,x_{k+1}-y_k)\cap (I,z))\subset \ini_<(I),$ thus, 
\[
\ini_<(I)=\ini_<((I,x_{k+1}-y_k)\cap (I,z))
\]
which leads to the desired statement.

We know the generators of $\ini_<(I)$ from Lemma~\ref{GBofI}. We now compute the Gr\"obner bases of $(I,z)$ and $(I,x_{k+1}-y_k)$ with respect to the 
reverse lexicographic order induced by $x_1>\cdots >x_n> y_1>\cdots >y_n>z$. 
By using the Gr\"obner basis of $I,$ one easily sees that $(I,z)$ is generated by the binomials $f_{ij}=x_jy_i-x_iy_j$ where $1\leq i<j\leq n$ and 
$j\neq k+1$, $i\neq k$ and by the following set of monomials: $\{z\}\cup\{x_iy_{k+1}: 1\leq i<k\}\cup\{x_ky_j: k+1\leq j\leq n\}\cup\{x_{k+1}y_i: 
1\leq i\leq k\}\cup\{x_jy_k: j> k+1\}\cup \{x_ix_{k+1}y_j, x_iy_ky_j: 1\leq i<k<k+1<j\leq n\}.$ By using Buchberger's criterion, one immediately 
checks that the above set of generators of $(I,z)$ is a Gr\"obner basis of $(I,z)$. Consequently, 
\[
G(\ini_<(I,z))=(G(\ini_<(I)\setminus\{x_{k+1}z,y_kz^2,y_iy_kz:1\leq i<k\})\cup\{z\} 
\]
which implies that 
\begin{equation}\label{eqini1}
\ini_<(I,z)=(\ini_<(I),z).
\end{equation}
Here we used the notation $G(J)$ for the minimal set of  monomial generators of the monomial ideal $J.$

By using the Gr\"obner basis of $I$ it follows that the ideal $(I,x_{k+1}-y_k)$ is generated by the binomials $x_{k+1}-y_k, g_i, 1\leq i<k, h_j, k+1\leq j\leq n, 
f_{ij}, 1\leq i<j\leq n, j\neq k+1,i\neq k, 
f_{i,k+1}^\prime=y_iy_k-y_iz=f_{i,k+1}-y_i(x_{k+1}-z), 1\leq i\leq k, f_{kj}, j> k+1,r_k,$ and $p_{ij}^\prime=t_{ij}=x_iy_jy_k-x_iy_jz, 1\leq i<k<k+1<j\leq n,$ since $q_{ik}=zf_{i,k+1}^\prime$.  Buchberger's criterion applied to this set of generators shows that they form a Gr\"obner basis of $(I,x_{k+1}-y_k).$ Moreover, we obtain
\[
G(\ini_<(I,x_{k+1}-y_k))=(G(\ini_<(I)\setminus(\{x_{k+1}z, x_{k+1}y_i: 1\leq i\leq k \}\cup\{y_iy_kz: 1\leq i<k\}))
\]
\[
\cup\{x_{k+1},y_iy_k: 1\leq i\leq k\}.
\]
therefore, we get the following equality:
\begin{equation}\label{eqini2}
\ini_<(I,x_{k+1}-y_k)=(\ini_<(I),x_{k+1},y_1y_k,\ldots,y_{k-1}y_k,y_k^2).
\end{equation}

By using the relations (\ref{eqini1}) and (\ref{eqini2}), we get 
\[
\ini_<(I,x_{k+1}-y_k)\cap \ini_<(I,z)=(\ini_<(I),x_{k+1}z, y_1y_kz,\ldots,y_k^2z)\subset \ini_<(I).
\]
\end{proof}

From the above proof we may also derive the following 

\begin{Corollary}\label{corsquarefree} 
$(I,z)$ is a radical ideal.
\end{Corollary}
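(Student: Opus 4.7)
The plan is to show this essentially for free from the computation already carried out inside the proof of Lemma~\ref{intersect}. The key observation is that equation (\ref{eqini1}) gives
\[
\ini_<(I,z)=(\ini_<(I),z),
\]
so everything reduces to examining whether the right-hand side is a squarefree monomial ideal.

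First, I would recall the explicit generating set $\Mc$ of $\ini_<(I)$ from Lemma~\ref{GBofI}. Inspecting this list, the only non-squarefree generators are those divisible by $z$, namely $x_{k+1}z$, $y_k^2 z$, and the elements $y_iy_k z$ for $1\leq i<k$. Once we pass to $(\ini_<(I),z)$ all of these generators become redundant and are absorbed by $z$. Thus a generating set of $\ini_<(I,z)$ is obtained by taking $z$ together with the monomials in $\Mc$ that do not involve $z$: that is, $x_jy_i$ for $1\leq i<j\leq n$, $x_iy_{k+1}$ for $1\leq i<k$, $x_ky_j$ for $k+1\leq j\leq n$, and $x_ix_{k+1}y_j$, $x_iy_ky_j$ for $1\leq i<k<k+1<j\leq n$. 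Every one of these monomials is squarefree.

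Finally, since $\ini_<(I,z)$ is a squarefree monomial ideal, it is radical, and then the standard fact cited at the opening of Section~\ref{modular} (\cite[Proposition 3.3.7]{HHbook}) — if an ideal has a radical initial ideal with respect to some monomial order, then the ideal itself is radical — implies that $(I,z)$ is radical. There is no real obstacle here; the work was already done inside Lemma~\ref{intersect}, and the corollary is just the observation that after adjoining $z$ the remaining generators of the initial ideal happen to be squarefree.
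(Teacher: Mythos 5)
Your argument is correct and is essentially the paper's own proof: both rely on equation (\ref{eqini1}) to write $\ini_<(I,z)=(\ini_<(I),z)$ and then observe that every generator of $\ini_<(I)$ involving $z$ is absorbed by $z$, leaving only squarefree generators, so that $(I,z)$ is radical. One tiny inaccuracy: of the generators you list as non-squarefree, only $y_k^2z$ actually fails to be squarefree ($x_{k+1}z$ and the $y_iy_kz$ are squarefree), but this does not affect the argument since all of them are divisible by $z$ and hence redundant.
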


\begin{proof}
By (\ref{eqini1}), we have $\ini_<(I,z)=(\ini_<(I),z)$. Since $\ini_<(I)$ has only one non-squarefree generator, namely $y_k^2z$ which is "killed" by 
$z,$ it follows that $\ini_<(I,z)$ is square free and, consequently, $(I,z)$ is a radical ideal.
\end{proof}

The last step in the proof of Theorem~\ref{radical} is shown in the following 

\begin{Lemma}\label{sqfree2}
The ideal $(I,x_{k+1}-y_k)$  is  radical.
\end{Lemma}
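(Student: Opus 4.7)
The plan is to decompose $J := (I, x_{k+1}-y_k)$ as $J = (J, y_k) \cap (J : y_k)$ and identify both factors with radical ideals. The key observation is that $y_k(y_k-z) \in J$: from the diamond basic binomials $x_{k+1}y_k - x_ky_{k+1}$ and $y_kz - x_ky_{k+1}$ in $I$ we get $y_k(x_{k+1}-z) \in I$, and reducing modulo $x_{k+1}-y_k$ yields $y_k^2 - y_kz = y_k(y_k-z) \in J$. Consequently $(y_k-z) \in J:y_k$, i.e., $(J, y_k-z) \subseteq J:y_k$.

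To promote this to the equality $J:y_k = (J, y_k-z)$, I would compare initial ideals with respect to the reverse lexicographic order used in Lemma~\ref{intersect}. Substituting $y_k \to z$ in the Gr\"obner basis of $J$ shows that binomials like $y_iy_k - y_iz$ and $y_k^2z - y_kz^2$ vanish, while the diamond binomial $x_{k+1}y_k - x_ky_{k+1}$ becomes $z^2 - x_ky_{k+1}$ with the squarefree leading term $x_ky_{k+1}$, producing a Gr\"obner basis of $(J, y_k-z)$ whose initial ideal is squarefree. A direct computation of $\ini_<(J):y_k$ from the generators of $\ini_<(J)$ in Lemma~\ref{intersect} should coincide with $\ini_<((J, y_k-z))$, giving the equality of ideals. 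Since $(J, y_k-z)$ is prime (see below) and does not contain $y_k$, the element $y_k$ is a non-zero divisor modulo $J:y_k$, and the standard identity $J = (J, y_k) \cap (J:y_k) = (J, y_k) \cap (J, y_k-z)$ follows.

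For radicality of $(J, y_k) = (I, x_{k+1}, y_k)$: the only non-squarefree generators of the relevant initial ideals --- $y_k^2z$ in $\ini_<(I)$ and $y_k^2$ in $\ini_<(J)$, together with generators such as $y_iy_kz$, $x_ix_{k+1}y_j$, and $x_iy_ky_j$ listed in Lemma~\ref{GBofI} --- all contain $x_{k+1}$ or $y_k$, so adjoining $y_k$ (and hence $x_{k+1}$ via $x_{k+1}-y_k$) makes them redundant. The resulting squarefree initial ideal forces $(J, y_k)$ to be radical by \cite[Proposition 3.3.7]{HHbook}. For primality of $(J, y_k-z)$: eliminating $x_{k+1}$ and $z$ via the linear relations exhibits it as the ideal of $2\times 2$ minors of the $2\times n$ matrix
\[
  M \;=\; \begin{pmatrix} y_1 & y_2 & \cdots & y_{k-1} & y_k & y_{k+1} & y_{k+2} & \cdots & y_n \\ x_1 & x_2 & \cdots & x_{k-1} & x_k & y_k & x_{k+2} & \cdots & x_n \end{pmatrix}.
\]
Every generalized entry $\alpha y_i + \beta x_i$ (or $\alpha y_{k+1} + \beta y_k$ at column $k+1$) of $M$ is a non-zero linear form for $(\alpha, \beta) \neq (0, 0)$, so $M$ is $1$-generic in the sense of Eisenbud; by Eisenbud's theorem $I_2(M)$ is prime, and therefore $(J, y_k-z)$ is prime.

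The main obstacle is the colon identity $J:y_k = (J, y_k-z)$. While the inclusion $\supseteq$ is immediate, the opposite inclusion requires a careful monomial-by-monomial comparison of $\ini_<(J):y_k$ with the initial ideal of $(J, y_k-z)$; the specific structure of the Gr\"obner basis in Lemma~\ref{GBofI}, with the non-squarefree leading term confined to the single monomial $y_k^2z$ (and hence $y_k^2$ in $\ini_<(J)$), is essential for this matching to succeed, and the routine verification involves tracking how every S-pair in the Gr\"obner basis of $(J, y_k-z)$ reduces to zero.
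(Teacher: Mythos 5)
Your strategy diverges substantially from the paper's (which simply re-examines $(I,x_{k+1}-y_k)$ under the lexicographic order with $z>x_1>\cdots>x_n>y_1>\cdots>y_n$ and exhibits a Gr\"obner basis with squarefree initial terms), and it contains a genuine gap at its central step, the colon identity $J:y_k=(J,y_k-z)$ for $J=(I,x_{k+1}-y_k)$. You propose to prove it by showing that $\ini_<(J):y_k$ coincides with $\ini_<((J,y_k-z))$. The logic would be sound if the coincidence held, since one always has $\ini_<((J,y_k-z))\subseteq \ini_<(J:y_k)\subseteq \ini_<(J):y_k$; but the outer two ideals are \emph{not} equal here, so the sandwich does not close. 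Concretely, by equation (\ref{eqini2}) the ideal $\ini_<(J)$ contains $x_{k+2}y_k$ (the initial term of $f_{k,k+2}=x_{k+2}y_k-x_{k+2}z$) and $y_iy_k$ for $i<k$, hence $\ini_<(J):y_k$ contains the variables $x_{k+2},\dots,x_n$ and $y_1,\dots,y_{k-1}$. On the other hand, the degree-one part of the homogeneous ideal $(J,y_k-z)$ is spanned by $x_{k+1}-y_k$ and $y_k-z$, whose only possible initial terms are $x_{k+1}$ and $y_k$; so none of $x_{k+2},\dots,x_n,y_1,\dots,y_{k-1}$ lies in $\ini_<((J,y_k-z))$. (Already for $n=3$, $k=1$ one computes $\ini_<(J):y_1=(x_2,x_3,y_1,x_1y_2,x_1y_3)$ while $\ini_<((J,y_1-z))=(x_2,y_1,x_1y_2,x_1y_3,x_3y_2)$.) The colon of initial ideals only bounds $\ini_<(J:y_k)$ from above, and the bound is strict here; note for instance that $x_{k+2}\notin J:y_k$ because $x_{k+2}(y_k-z)\in I$ reduces the question to $x_{k+2}z\in J$, which fails since $x_{k+2}z\notin\ini_<(J)$. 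So the identity you need is plausible but unproved by the method you describe; establishing it would require either a genuine Gr\"obner basis computation for $J:y_k$ itself, or an argument in the style of Lemma~\ref{intersect}, comparing $\ini_<((J,y_k))\cap\ini_<((J,y_k-z))$ with $\ini_<(J)$ to get $J=(J,y_k)\cap(J,y_k-z)$ directly, which would bypass the colon altogether.

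Two further remarks. First, your claim that $(J,y_k)=(I,x_{k+1},y_k)$ has squarefree initial ideal is asserted by inspecting $(\ini_<(J),y_k)$, but in general $\ini_<((J,y_k))$ may strictly contain $(\ini_<(J),y_k)$ and could acquire new non-squarefree generators; a Buchberger check for the enlarged generating set is still needed (this is likely fixable but is not automatic). Second, your primality argument for $(J,y_k-z)$ is correct and nice: the quotient is indeed presented by the $2\times 2$ minors of the displayed matrix, which is $1$-generic, so Eisenbud's theorem applies; this recovers the content of Lemma~\ref{ufff} via the isomorphism $K[L_k]/(I,x_{k+1}-y_k,z-y_k)\cong K[D]/(I_D,x_{k+1}-y_k)$. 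By contrast, the paper's proof avoids all of these difficulties by a single change of monomial order, so even if your decomposition can be repaired, it is considerably longer.
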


\begin{proof}
We show that $(I,x_{k+1}-y_k)$  has a squarefree initial ideal with respect to the lexicographic order induced by 
$z> x_1>\cdots >x_n>y_1>\cdots >y_n.$ We recall from the proof of Lemma~\ref{intersect} that $(I,x_{k+1}-y_k)$ is generated by 
$x_{k+1}-y_k, g_i, 1\leq i<k, h_j, k+1\leq j\leq n,  f_{ij}, 1\leq i<j\leq n, j\neq k+1,i\neq k, 
f_{i,k+1}^\prime=y_iy_k-y_iz, 1\leq i\leq k, f_{kj}, j> k+1,r_k,$ and $p_{ij}^\prime=t_{ij}=x_iy_jy_k-x_iy_jz, 1\leq i<k<k+1<j\leq n.$ In this 
generating set, the generators $r_k$ and $p_{ij}^\prime$ are redundant. Indeed, $r_k=z f_{k,k+1}^\prime$ and $p_{ij}^\prime=(y_k-z)f_{ij}
-x_j f_{i,k+1}^\prime$ for any $1\leq i< k< k+1 < j\leq n.$ Moreover, for every $1\leq i<k$ we may replace the generator $g_i$ by
$g_i^\prime=x_iy_{k+1}-y_iy_k=f_{i,k+1}^\prime- g_i.$ Finally, for $j> k+1$ we may replace the generator $f_{kj}$ by $x_ky_j-x_jy_k=f_{kj}-h_j.$ 
Therefore, $(I,x_{k+1}-y_k)$ is generated by the following binomials: $x_{k+1}-y_k, g_i^\prime=x_iy_{k+1}-y_iy_k$ for $1\leq i<k,$ $h_j=zx_j-x_ky_j$
for $k+1\leq j\leq n$, $f_{i,k+1}^\prime=zy_i-y_iy_k$ for $1\leq i\leq k$, and $f_{ij}=x_iy_j-x_jy_i$ for $1\leq i< j\leq n$ with $j\neq k+1.$ By trivial 
calculations  one may check that this set of generators is a Gr\"obner basis of $(I,x_{k+1}-y_k)$ with respect to the lexicographic order induced by 
$z>x_1>\cdots >x_n>y_1>\cdots >y_n.$ Since all these generators have squarefree initial monomials, it follows that the initial ideal of $(I,x_{k+1}-y_k)$ is squarefree and, thus, $(I,x_{k+1}-y_k)$ is a radical ideal. 
\end{proof}

We end this section with a few comments. Going back to our Example~\ref{latticeN}, by applying  Theorem~\ref{radical}, we see that every proper 
sublattice $N^\prime$ of $N$ has a radical join-meet ideal although $I_N$ is not radical. The following example shows that the radical property does 
not pass from a lattice to any of its proper sublattices. 

\begin{Example}\label{exampleR} {\em
Let $R$ be the lattice of Figure~\ref{latticeR}.

\begin{figure}[hbt]
\begin{center}
\psset{unit=0.5cm}
\begin{pspicture}(-25.3,-2.5)(4,5)

\psline(-9,3)(-11,1)
\psline(-9,3)(-9,1)
\psline(-9,3)(-7,1)
\psline(-11,1)(-9,-1)
\psline(-9,1)(-9,-1)
\psline(-7,1)(-9,-1)
\psline(-9,3)(-11,5)
\psline(-13,3)(-11,5)
\psline(-13,3)(-11,1)
\psline(-11,1)(-13,-1)
\psline(-13,-1)(-11,-3)
\psline(-11,-3)(-9,-1)
\psline(-15,1)(-13,-1)
\psline(-15,1)(-13,3)

\rput(-9,3){$\bullet$}
\rput(-11,1){$\bullet$}
\rput(-9,1){$\bullet$}
\rput(-7,1){$\bullet$}
\rput(-9,-1){$\bullet$}
\rput(-11,-3){$\bullet$}
\rput(-13,-1){$\bullet$}
\rput(-13,3){$\bullet$}
\rput(-11,5){$\bullet$}
\rput(-15,1){$\bullet$}
\end{pspicture}
\end{center}
\caption{}\label{latticeR}
\end{figure}
One may check with Singular \cite{GPS} that $I_R$ is a radical ideal. However the ideal $I_N$ attached to  its proper sublattice $N$ is not radical, as we have seen in Example~\ref{latticeN}.
}
\end{Example}

\section{The minimal primes of the join-meet ideal of $L_k$}
\label{minsection}

In this section we apply the results of Section~\ref{radsection} to determine explicitly the minimal primes of the ideals $I_{L_k}$ for 
$1\leq k\leq n-1.$ We recall that we denoted by $D$ the distributive lattice displayed in Figure~\ref{chain} (a), and by $L_k$ the 
lattice displayed in Figure~\ref{chain} (b). We denote by $D_k$ the sublattice of $D$ with the elements $x_i,y_i, 1\leq i\leq k,$ and by $D_k^\prime$ the sublattice of $D$ with the elements $x_{i},y_i, k+1\leq i\leq n.$

Before stating the main theorem of this section, we need to prove a preparatory result.

\begin{Lemma}\label{ufff}
For any $1\leq k\leq n-1,$ the ideal $(I_D,x_{k+1}-y_k)$ is prime.
\end{Lemma}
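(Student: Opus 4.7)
The plan is to realize $(I_D, x_{k+1}-y_k)$ as the ideal of $2\times 2$ minors of a specific $2\times n$ matrix of indeterminates and then invoke Eisenbud's primality theorem for $1$-generic matrices. Since $D$ is the lattice of divisors of $2\cdot 3^n$, the incomparable pairs in $D$ are exactly $(x_j,y_i)$ with $1\leq i<j\leq n$, and for such a pair one has $x_j\wedge y_i=x_i$ and $x_j\vee y_i=y_j$. Thus the basic binomials of $I_D$ are the $2\times 2$ minors $x_jy_i-x_iy_j$ of the generic $2\times n$ matrix
\[
M=\begin{pmatrix} x_1 & x_2 & \cdots & x_n \\ y_1 & y_2 & \cdots & y_n \end{pmatrix},
\]
so $I_D=I_2(M)$.

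Next, I would introduce a new variable $z$ for the common image of $x_{k+1}$ and $y_k$ in $K[D]/(x_{k+1}-y_k)$, so that this quotient is isomorphic to the polynomial ring $K[\bar M]$ in the entries of
\[
\bar M=\begin{pmatrix}
x_1 & \cdots & x_{k-1} & x_k & z & x_{k+2} & \cdots & x_n \\
y_1 & \cdots & y_{k-1} & z & y_{k+1} & y_{k+2} & \cdots & y_n
\end{pmatrix}.
\]
A direct case analysis on each basic binomial $x_jy_i-x_iy_j$ of $I_D$, distinguishing whether $i$ or $j$ equals $k$ or $k+1$, shows that the substituted generators are precisely the $2\times 2$ minors of $\bar M$. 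Hence the image of $(I_D,x_{k+1}-y_k)$ under this isomorphism is the determinantal ideal $I_2(\bar M)\subset K[\bar M]$.

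Now observe that $\bar M$ is $1$-generic in the sense of Eisenbud: each column of $\bar M$ has two distinct indeterminates (column $k$ is $(x_k,z)^T$, column $k+1$ is $(z,y_{k+1})^T$, and the other columns involve the independent pairs $(x_i,y_i)$), so for every $(\lambda,\mu)\in K^2\setminus\{(0,0)\}$ the generalized row $\lambda R_1+\mu R_2$ has no zero entry. Eisenbud's theorem on $1$-generic matrices then gives that $I_2(\bar M)$ is prime, and pulling back yields primality of $(I_D,x_{k+1}-y_k)$.

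The main obstacle is the $1$-genericity verification together with the case-by-case identification of the substituted generators in the second step; both reduce to elementary bookkeeping. If one prefers to avoid citing Eisenbud, the alternative is to give the explicit parametrization $x_i\mapsto ut_i$, $y_i\mapsto vt_i$, $z\mapsto ut_{k+1}=vt_k$ into the hypersurface ring $K[u,v,t_1,\ldots,t_n]/(ut_{k+1}-vt_k)$, which is a domain because $ut_{k+1}-vt_k$ is irreducible (linear in $u,v$ with the coprime coefficients $t_{k+1},-t_k$ in the UFD $K[t_1,\ldots,t_n]$). A dimension or Hilbert-series comparison then identifies the kernel of this map with $I_2(\bar M)$, yielding primality directly.
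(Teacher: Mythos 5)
Your reduction is correct and the route is genuinely different from the paper's. In $D$ the incomparable pairs are exactly $(x_j,y_i)$ with $i<j$, with $x_j\wedge y_i=x_i$ and $x_j\vee y_i=y_j$, so $I_D$ is indeed $I_2$ of the generic $2\times n$ matrix, and identifying $x_{k+1}$ with $y_k$ carries $(I_D,x_{k+1}-y_k)$ onto the determinantal ideal $I_2(\bar M)$ of your matrix $\bar M$ in $2n-1$ variables. The paper instead reduces to $k=1$, shows that the reductions of the $f_{ij}$ modulo $x_2-y_1$ form a Gr\"obner basis whose lexicographic initial ideal is squarefree (hence the ideal is radical), uses \cite[Lemma 12.1]{S} to see that all variables are nonzerodivisors, and then applies Proposition~\ref{general}(b) to conclude that the ideal, being equal to its saturation by the product of the variables, is prime. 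Your argument collapses all of this into one appeal to Eisenbud's theorem on $1$-generic matrices; it is shorter and yields extra information (codimension $n-1$, Cohen--Macaulayness via the Eagon--Northcott complex), at the cost of importing a nontrivial external theorem usually stated over an algebraically closed field (harmless here, since primality of $I_2(\bar M)\bar K[\bar M]$ implies primality of $I_2(\bar M)$), whereas the paper reuses only machinery it has already built.

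One step must be repaired: your verification of $1$-genericity checks only that every generalized row $\lambda R_1+\mu R_2$ has no zero entry, which is strictly weaker than $1$-genericity; the latter requires that no generalized entry $\lambda^{T}\bar M\mu$ vanishes for nonzero $\lambda\in K^2$ and nonzero $\mu\in K^n$, i.e.\ that the $n$ entries of each generalized row are linearly independent over $K$. (The matrix with rows $(x,y)$ and $(y,x)$ passes your test, yet its determinant $x^2-y^2$ is not prime.) The full condition does hold for $\bar M$: in $\lambda^{T}\bar M\mu$ the coefficient of $x_j$ for $j\neq k+1$ is $\lambda_1\mu_j$, that of $y_j$ for $j\neq k$ is $\lambda_2\mu_j$, and that of $z$ is $\lambda_2\mu_k+\lambda_1\mu_{k+1}$, and these cannot all vanish with $\lambda\neq 0$ and $\mu\neq 0$; this computation needs to be included. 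Also, in your fallback parametrization a bare dimension comparison does not force $\ker=I_2(\bar M)$ unless you already know $I_2(\bar M)$ is prime or unmixed; you would need a Hilbert-series comparison, which requires a Gr\"obner basis of $I_2(\bar M)$ and thus brings you back to an argument of the paper's type.
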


\begin{proof}
It is enogh to show that $(I_D,x_2-y_1)$ is a prime ideal since by an appropriate change of variables, we may map the ideal $(I_D,x_2-y_1)$ into 
$(I_D,x_{k+1}-y_k).$

Let $f_{ij}=x_iy_j-x_jy_i,$ $1\leq i < j\leq n$ the generators of $I_D.$ By \cite[Theorem 2.2]{HH1}, $\{f_{ij}: 1\leq i< j\leq n\}$ is a Gr\"obner 
basis of $I_D$ with respect to any monomial order. Actually, if $\ini_<f_{ij}$ and $\ini_<f_{k\ell}$ are not relatively prime, then the 
$S$-polynomial of the pair $(f_{ij},f_{k\ell})$ may be expressed as 
\begin{equation}\label{spoly}
S(f_{ij},f_{k\ell})=z f_{pq}
\end{equation} 
for some variable $z\in K[D]$ and $1\leq p< q\leq n.$

Let $<$ be an arbitrary monomial order on $K[D].$ For any $1\leq i < j\leq n,$ we denote by $g_{ij}$ the reduction of $f_{ij}$ modulo $x_2-y_1$. More 
precisely, $g_{ij}$ is obtained from $f_{ij}$ by replacing $x_2$ by $y_1$ if $x_2>y_1$ or $y_1 $ by $x_2$ if $y_1> x_2.$ Since 
$\{f_{ij}:1\leq i < j\leq n\}$ is a Gr\"obner basis of $I_D$ with respect to $<$, it follows that the set $\Gc=\{g_{ij}:1\leq i < j\leq 
n\}\cup\{x_2-y_1\}$ is a Gr\"obner basis of $(I_D,x_2-y_1)$ with respect to $<.$ This is essentially due to equation (\ref{spoly}). In particular, 
$\Gc$ is a Gr\"obner basis of $(I_D,x_2-y_1)$ with respect to the lexicographic order induced by $x_1>\cdots > x_n> y_1 >\cdots > y_n$. In this case 
it follows that the initial ideal of $(I_D,x_2-y_1)$ is generated by the following squarefree monomials: $x_2,x_iy_j$ for $i,j\neq 2$, $x_1y_2$, and 
$x_jy_2$ for $2<j\leq n.$ This shows that $(I_D,x_2-y_1)$ is a radical ideal. 
On the other hand, by applying \cite[Lemma 12.1]{S}, it follows that all the variables are regular on $(I_D,x_2-y_1)$, which implies that 
$(I_D,x_2-y_1): \prod_{1\leq i\leq n}x_i\prod_{1\leq j\leq n}y_j=(I_D,x_2-y_1).$ Finally, by applying Proposition~\ref{general}, we get the desired conclusion.
\end{proof}

\begin{Theorem}\label{minprimes}
Let $1\leq k\leq n-1$ and $I=I_{L_k}$ the join-meet ideal of the lattice $L_k.$ The minimal primes of $I$ are the followings:
\[
P=(I, z-x_{k+1},z-y_k), P_1=(z,x_1,\ldots,x_n), P_1^\prime=(z,y_1,\ldots,y_n),
\]
\[
P_2=(z,x_1,\ldots,x_k,y_1,\ldots,y_k)+I_{D_k^\prime}, P_2^\prime=(z,x_{k+1},\ldots,x_n, y_{k+1},\ldots,y_n) + I_{D_k},
\]
\[
P_3=(x_1,\ldots,x_n,y_1,\ldots,y_k), P_3^\prime=(y_1,\ldots,y_n,x_{k+1},\ldots,x_n).
\]
\end{Theorem}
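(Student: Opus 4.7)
The plan is to apply Theorem~\ref{intersection} and Proposition~\ref{mincond}, exploiting the fact that $I = I_{L_k}$ is radical by Theorem~\ref{radical}. By Theorem~\ref{intersection} every minimal prime of $I$ has the form $P_A(L_k) = I_{L_A} : \prod_{a \notin A} a + (a : a \in A)$ for some admissible set $A \subseteq L_k$. The proof then splits into three tasks: (i) identify each of the seven listed ideals as $P_A$ for an explicit admissible $A$ and verify primality; (ii) show no further admissible set yields a new minimal prime; and (iii) check pairwise non-containment.

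I would start by listing the basic binomials of $L_k$: the staircase relations $f_{ij}=x_jy_i-x_iy_j$ for $1\le i<j\le n$ (from incomparability of $x_j,y_i$ in $D$), together with the two families arising from the diamond at level $k$, namely $zx_j-x_ky_j$ for $k+1\le j\le n$ (from $z\wedge x_j=x_k$, $z\vee x_j=y_j$) and $zy_i-x_iy_{k+1}$ for $1\le i\le k$ (from $z\wedge y_i=x_i$, $z\vee y_i=y_{k+1}$). The admissible sets corresponding to the seven primes are: $A=\emptyset$ for $P$; $\{z\}\cup\{x_1,\ldots,x_n\}$ for $P_1$ (dually for $P_1'$); $\{z,x_1,\ldots,x_k,y_1,\ldots,y_k\}$ for $P_2$ (dually for $P_2'$); and $\{x_1,\ldots,x_n,y_1,\ldots,y_k\}$ for $P_3$ (dually for $P_3'$). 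Admissibility in each case is a direct check against the three families of basic binomials. For each nonempty $A$ above, the sublattice $L_A$ is either a chain (so $I_{L_A}=0$) or one of the distributive ladders $D_k,D_k'$ (so $I_{L_A}$ is prime); in either case $I_{L_A}$ equals its saturation by $\prod_{a\notin A}a$ since no variable is a zero-divisor on the corresponding Hibi ring. Hence $P_A=I_{L_A}+(a:a\in A)$ matches the listed generators.

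The delicate case is $A=\emptyset$. By Lemma~\ref{ufff} the assignment $z\mapsto x_{k+1}$ induces an isomorphism $K[L_k]/P\cong K[D]/(I_D,x_{k+1}-y_k)$, a domain, so $P$ is prime and contains no variable; hence $P\supseteq I:\prod_{a\in L_k}a=P_\emptyset$. For the reverse containment the key computation is
\[
y_k(z-x_{k+1})=(y_kz-x_ky_{k+1})-(x_{k+1}y_k-x_ky_{k+1})\in I,
\]
since both parenthesized terms are basic binomials; thus $z-x_{k+1}\in I:y_k\subseteq P_\emptyset$, and by the symmetric identity $x_{k+1}(z-y_k)\in I$ also $z-y_k\in I:x_{k+1}\subseteq P_\emptyset$. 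Hence $P=P_\emptyset$. Pairwise non-containment of the seven primes follows from explicit witnesses: e.g.\ $z-y_k\in P\setminus P_1$ (since $y_k\notin P_1$) and $x_1\in P_1\setminus P$ (since $P$ contains no variable); the other pairs are handled analogously.

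The main obstacle is (ii): ensuring no admissible set outside our list yields a further minimal prime. Here one splits cases on whether $z\in A$ and on how $A$ meets the two chains $\{x_i\}_{i\le k},\{x_i\}_{i>k},\{y_i\}_{i\le k},\{y_i\}_{i>k}$. The binomials $f_{ij}$ propagate membership along each chain, while the two diamond families couple membership of $z$ with that of $\{x_k,y_{k+1}\}$ and force corresponding patterns on $\{x_i,y_i\}$ in the two ranges $i\le k$ and $i\ge k+1$. A careful walkthrough of the cases shows that any admissible $A$ either coincides with one of the seven on our list or properly contains one of them as an admissible subset; by Proposition~\ref{mincond} the latter gives $P_A\supsetneq P_{A'}$, so $P_A$ is not minimal. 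This completes the identification of the minimal primes.
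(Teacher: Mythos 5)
Your overall strategy coincides with the paper's: invoke Theorem~\ref{intersection} to reduce to primes of the form $P_A(L_k)$, identify the seven listed ideals with explicit admissible sets, and rule out all other admissible sets. Your treatment of $A=\emptyset$ is correct and in fact slightly cleaner than the paper's: exhibiting $y_k(z-x_{k+1})\in I$ and $x_{k+1}(z-y_k)\in I$ gives the containment $(I,z-x_{k+1},z-y_k)\subseteq I:\prod_{a\in L_k}a$ directly, where the paper simply asserts the reverse containment and appeals to minimality.

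The gap is in your step (ii). You claim that any admissible $A$ outside the list ``properly contains one of them as an admissible subset'' and that ``by Proposition~\ref{mincond} the latter gives $P_{A}\supsetneq P_{A'}$.'' Proposition~\ref{mincond} is an equivalence whose right-hand side has \emph{two} conditions: $A'\subsetneq A$ \emph{and} the colon-ideal containment $I_{L_{A'}}:\prod_{a\notin A'}a\subseteq I_{L_{A}}:\prod_{a\notin A}a+(b:b\in A\setminus A')$. Set containment alone is not sufficient, and your reduction is refuted by your own list: every nonempty admissible set properly contains the admissible set $\emptyset$, yet the six nonempty sets you exhibit all give minimal primes (for instance $z-x_{k+1}\in P_{\emptyset}(L_k)\setminus P_3$, so $P_\emptyset(L_k)\not\subseteq P_3$). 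The actual content of the completeness step is ideal-theoretic rather than combinatorial: one must show that the prime $P_A$ contains one of the seven primes \emph{as an ideal}. The paper does this by taking $A$ to be the set of variables lying in a minimal prime and repeatedly exploiting primality; for example, when $z\notin P_A$ one deduces $y_k-x_{k+1}\in P_A$ from $z(y_k-x_{k+1})\in I$, and then either $x_{k+1}\in P_A$ (leading to $P_3$, $P_3'$, or to non-minimality) or $z-x_{k+1},\,z-y_k\in P_A$, which forces $P_A\supseteq P$ and hence non-minimality. Your ``propagation'' argument only tracks which \emph{variables} belong to $A$ and cannot produce these memberships of differences of variables; without them the case analysis does not close.
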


\begin{proof}
By Theorem~\ref{intersection}, since $I$ is a radical ideal,  we know that any minimal prime of $I$ is of the form $P_A(L_k)$ where 
$A$ is an admissible set of $I.$ 

Let $P=P_{\emptyset}(L_k).$ Then $P=I: (z\prod_{1\leq i\leq n}x_i\prod_{1\leq j\leq n}y_j).$ We obviously have
\begin{equation}\label{eqstar}
P\supset (I,z-x_{k+1},x-y_k)\supset I.
\end{equation} 

On the other hand, 
\[
\frac{K[L_k]}{(I,z-x_{k+1},x-y_k)}\cong \frac{K[D]}{(I_D,x_{k+1}-y_k)}.
\]
Since, by Lemma~\ref{ufff}, $(I_D,x_{k+1}-y_k)$ is a prime ideal, it follows that $(I,z-x_{k+1},x-y_k)$ is a prime ideal as well. Therefore, since $P$ 
is a minimal prime of $I,$ by using (\ref{eqstar}), we must have $P=(I,z-x_{k+1},x-y_k).$

Now we look at the minimal primes which correspond to non-empty admissible sets. Let $A$ be such an admissible set and assume first that $z\in A.$ 
If $y_\ell\not\in A$ for every $1\leq \ell\leq n,$ then, by using the basic binomials $zy_i-x_iy_{k+1}$ for $i\leq k$ and $x_ky_j-x_jy_k$ for $j\geq 
k+1$, it follows that $P\supset (z,x_1,\ldots,x_n)\supset I,$ hence, we get $P_A(L_k)=(z,x_1,\ldots,x_n)=P_1.$ Since the dual lattice of $L_k$ has 
obviously the same relation ideal, it follows that $P_1^\prime$ is the minimal prime which correspond to the admissible set $A$ which contains $z$ 
and does not contain any of the variables $x_i, i=1,\ldots,n$. 

Now we consider an admissible set $A$ which contains $z$ and has the property that there exist $1\leq i,j\leq n$ such that  $x_i,y_j\not\in A.$ If 
$i\neq j,$ then, since $x_iy_j-x_jy_i$ is a basic binomial, it follows that $x_j,y_i\not\in A.$ Therefore, we may assume that there exists 
$1\leq i\leq n$ such that $x_i,y_i\not\in A.$ Let us suppose that $x_k,y_k\not\in A$. From the relations $x_jz-x_ky_j$ we get $y_j\in A$ for $j\geq 
k+1$ 
and, next, from the relations $x_jy_k-x_ky_j$, we get  $x_j\in A$ for $j\geq k+ 1$. Thus, in this case, $P_A(L_k)\supset P_2^\prime\supset I$. But 
$P_2^\prime$ is 
obviously a prime ideal, therefore, $P_A(L)=P_2^\prime.$ The dual situation correspond to $x_{k+1},y_{k+1}\not\in A,$ and in this case one gets 
$P_A(L_k)=P_2.$ It remains to consider $x_k,y_k,x_{k+1},y_{k+1}\in A.$ Then it follows that $P_A(L_k)\supsetneq P$ which implies that $P_A(L_k)$ is 
not a minimal prime.

We still need to identify the minimal primes which correspond to non-empty admissible sets $A$ which do not contain $z.$ Let $A$ be such that 
$z\not\in A$ and $P_A(L_k)$ is a minimal prime of $I.$ Since $zy_k-x_ky_{k+1}, zx_{k+1}-x_ky_{k+1}, y_kx_{k+1}-x_ky_{k+1}\in I\subset P_A(L_k)$, we 
get $z(y_k-x_{k+1})\in P_A(L_k),$ hence $y_k-x_{k+1}\in P_A(L)$, and $x_{k+1}(z-y_k)\in P_A(L)$. If $x_{k+1}\not\in P_A(L)$, it follows that 
$z-x_{k+1}\in P_A(L).$ But this further implies that $P_A(L_k)\supsetneq P,$ hence $P_A(L_k)$ is not a minimal prime. Consequently, 
$x_{k+1}\in A,$ and, next, $y_k\in P_A(L_k).$ By using again the basic binomial $ y_kx_{k+1}-x_ky_{k+1},$ we obtain $x_k\in A$ or $y_{k+1}\in A.$

We analyze the following cases. 

Case 1. $x_k\in A$ and $y_{k+1}\not\in A.$ By using the relations $x_jz-x_ky_j$ for $j> k+1$, we get $x_j\in A$ for $j>k+1.$ Similarly, by using 
the basic binomials $y_{k+1}x_i-y_ix_{k+1}$ for $i< k,$ we get $x_i\in A$ for all $i<k.$ Therefore, we have $x_i\in A$ for all $i=1,\ldots,n$. By 
using the basic binomials $zy_i-x_iy_{k+1}$ for $i< k,$ we also get $y_i\in A$. Then we have actually proved that $P_A(L_k)\supset (x_1,\ldots,x_n,y_1,\ldots,y_k)=P_3\supset I$. Since $P_A(L_k)$ is a minimal prime of $I,$ we must have $P_A(L_k)=P_3.$

Case 2. $x_k\not\in A$ and $y_{k+1}\in A.$ This is the dual of the above case and leads to the conclusion that $P_A(L_k)=P_3^\prime.$ 

Case 3. Let $x_k,y_{k+1}\in A.$ From the relations $zy_i-x_iy_{k+1}$ for $i<k,$ and $x_jz-x_ky_j$ for $j> k$, we obtain 
$y_i\in A$ for $i<k$, and $x_j\in A$ for $j>k.$ If there exists $i<k$ such that $x_i\not\in A,$ by using the relations $x_iy_j-x_jy_i$ for $j>k+1$, we get $y_j\in A$ for all $j>k+1.$ In this case it follows that $A\supset \{y_1,\ldots,y_n,x_k,\ldots,x_n\}$ and $P_A(L_k)\supsetneq P_3^\prime,$ hence $P_A(L_k)$ is not a minimal prime, contradiction. In other words, Case 3 does not hold, and this completes the proof.
\end{proof}

\begin{Corollary}\label{dim}
The join-meet ideal$I_{L_k}$ is not unmixed and $\dim(K[L_k]/I_{L_k})=n.$
\end{Corollary}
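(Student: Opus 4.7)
The plan is to invoke Theorem~\ref{minprimes}, which gives the seven minimal primes of $I=I_{L_k}$, and to compute the Krull dimension of each quotient $K[L_k]/P_i$. Since $\dim K[L_k]/I$ equals the maximum of these dimensions and unmixedness would force them all to coincide, both claims of the corollary follow once the seven values are in hand.

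For the three ``codimension-one'' minimal primes $P$, $P_1$, $P_1'$, I expect the quotient to have dimension $n$. Indeed $K[L_k]/P_1\cong K[y_1,\ldots,y_n]$ and $K[L_k]/P_1'\cong K[x_1,\ldots,x_n]$ by inspection, while $K[L_k]/P\cong K[D]/(I_D,x_{k+1}-y_k)$, via the isomorphism already used in the proof of Theorem~\ref{minprimes}, is an integral domain by Lemma~\ref{ufff}. Since $K[D]/I_D$ is the Hibi ring of the $2\times n$ ladder distributive lattice $D$, it is a Cohen--Macaulay domain of Krull dimension $n+1$, so modding out by the nonzero linear form $x_{k+1}-y_k$ drops the dimension to exactly $n$.

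For the remaining four minimal primes, each quotient is either a polynomial ring or a Hibi ring of a sub-ladder. The variables surviving $P_3$ (respectively $P_3'$) form a chain in $L_k$, so no basic binomial relation survives and $K[L_k]/P_3\cong K[y_{k+1},\ldots,y_n,z]$, $K[L_k]/P_3'\cong K[x_1,\ldots,x_k,z]$, of dimensions $n-k+1$ and $k+1$. Similarly $K[L_k]/P_2\cong K[D_k']/I_{D_k'}$ and $K[L_k]/P_2'\cong K[D_k]/I_{D_k}$ are the Hibi rings of the $2\times(n-k)$ and $2\times k$ ladder sub-lattices $D_k'$ and $D_k$, again of dimensions $n-k+1$ and $k+1$.

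Collecting the seven values $n,\,n,\,n,\,n-k+1,\,k+1,\,n-k+1,\,k+1$ gives $\dim K[L_k]/I=n$. Since $1\leq k\leq n-1$, at least one of $k+1,\,n-k+1$ is strictly less than $n$ (barring the trivial boundary case $n=2$), so some minimal prime has quotient dimension strictly below $n$ and $I$ is not unmixed. The only mildly non-trivial step is to identify, for each admissible set $A$ in Theorem~\ref{minprimes}, the quotient $K[L_k]/P_A$ with the expected polynomial or Hibi ring: one must check that saturating $I_{L_A}$ by $\prod_{a\notin A}a$ introduces no new relations beyond those in $I_{L_A}$ itself, which is essentially implicit in the proof of Theorem~\ref{minprimes}. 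Once this bookkeeping is settled, the standard formula $\dim K[L]/I_L=|P|+1$ applied to each ladder sub-lattice finishes the computation at once.
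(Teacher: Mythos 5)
Your approach is the same as the paper's: read off the seven minimal primes from Theorem~\ref{minprimes}, compute $\dim K[L_k]/Q$ for each of them using Hibi's formula (the dimension of the Hibi ring of a distributive lattice is the number of join-irreducible elements plus one), and conclude. Your identifications of the quotients are all correct, and your values for $P$, $P_1$, $P_1'$, $P_3$, $P_3'$ agree with the paper's. For $P_2$ and $P_2'$, however, you obtain $n-k+1$ and $k+1$ while the paper records $n-k$ and $k$; here you are right and the paper is off by one. Indeed $K[L_k]/P_2\cong K[D_k']/I_{D_k'}$ is the Hibi ring of the $2\times(n-k)$ grid $D_k'$ (equivalently, the ring defined by the $2\times 2$ minors of a generic $2\times(n-k)$ matrix), whose dimension is $(n-k)+1$. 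This discrepancy does not affect the conclusion $\dim K[L_k]/I_{L_k}=n$, which your argument establishes completely.

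It does affect the non-unmixedness claim, and the ``trivial boundary case $n=2$'' that you set aside is not trivial: it is a genuine counterexample to the corollary as stated. For $n=2$, $k=1$ the lattice $L_1$ is the diamond $M_3$, and with the corrected dimensions all seven minimal primes have quotient dimension $2=n$ (for instance $P_2=(z,x_1,y_1)$ because $D_1'$ is a two-element chain and $I_{D_1'}=0$), so $I_{L_1}$ is unmixed. The paper's proof reaches ``not unmixed'' in this case only through its erroneous values for $\dim K[L_k]/P_2$ and $\dim K[L_k]/P_2'$. So your proof is correct and complete for $n\geq 3$, where $k\geq 2$ or $k\leq n-2$ must hold and hence some minimal prime has quotient dimension strictly below $n$; the case you excluded cannot be repaired, and the non-unmixedness statement should carry the hypothesis $n\geq 3$.
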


\begin{proof}
It is known (see \cite{H}), that if $\Dc$ is a distributive lattice, then $\dim(K[\Dc]/I_{\Dc})$ is equal to the number of the join irreducible 
elements of $\Dc$ plus $1.$ Therefore, we get
\[
\dim(K[L_k]/P)=n=\dim(K[L_k]/P_1)=\dim(K[L_k]/P_1^\prime),
\]
\[
\dim(K[L_k]/P_2)=n-k, \dim(K[L_k]/P_2^\prime)=k,
\]
\[
 \dim(K[L_k]/P_3)=n-k+1,  \dim(K[L_k]/P_3^\prime)=k+1.
\] The above equalities yield the desired statements.
\end{proof}

\medskip


\begin{thebibliography}{}

\bibitem{AHH} A.\ Aramova, J.\ Herzog, T.\ Hibi,  Finite lattices and lexicographic Gr\"obner bases, {\em Europ. J. Combinatorics} {\bf 21} (2000), 431--439.


\bibitem {B} G.\ Birkoff, \textit{Lattice Theory}, Third edition, American Mathematical Society, 1967.

\bibitem  {ES} D.\ Eisenbud, B.\ Sturmfels, \textit{Binomial ideals}, Duke Math. J. \textbf{84} (1996), 1--45.

\bibitem {EH} V.\ Ene, J.\ Herzog, \textit{Gr\"obner bases in Commutative Algebra}, Graduate Studies in Mathematics {\bf 130}, American Mathematical Society, 2011.

\bibitem  {GPS}  G.-M.~Greuel, G.~Pfister and H.~Sch\"{o}nemann:
\textsc{Singular} 2.0.\  A Computer Algebra System for Polynomial
Computations. Centre for Computer Algebra, University of
Kaiserslautern, (2001), {\tt http://www.singular.uni-kl.de}.

\bibitem{H1} M.\ Hashimoto, T.\ Hibi, A.\ Noma, \textit{Divizor class groups of affine semigroup rings associated with distributive lattices}, J. Algebra 
{\bf 149}(1992), 352--357. 

\bibitem{HHbook} J.\ Herzog, T.\ Hibi, \textit{Monomial Ideals},  Graduate Texts in Mathematics {\bf 260}, Springer--Verlag, 2010.

\bibitem {HH1} J.\ Herzog, T.\ Hibi, \textit{Finite lattices and Gr\"obner bases}, preprint 2011, arXiv:1109.4067.

\bibitem{H}
T.\ Hibi, \textit{Distributive lattices, affine semigroup rings and algebras with straightening laws}, in``Commutative Algebra
and Combinatorics'' (M.\ Nagata and H.\ Matsumura, Eds.), Advanced Studies in Pure Math., Volume 11, North--Holland, Amsterdam, 1987,
pp. 93 -- 109.

\bibitem {H2} T.\ Hibi, \textit{Canonical ideals of Cohen-Macaulay partially ordered sets}, Nagoya Math. J. {\bf 112}(1988), 1-- 24.

\bibitem {H3} T.\ Hibi, \textit{Hilbert functions of Cohen-Macaulay integral domains and chain conditions of finite partially ordered sets}, 
J. Pure Appl. Algebra {\bf 72}(1991), 265--273.

\bibitem {OP} I.\ Ojeda, R.\ Peidra, \textit{Cellular binomial ideals. Primary decomposition
of binomial ideals}, J. Symbolic Comput. {\bf 30} (2000), 383 -- 400.

\bibitem{Q} A.\ Qureshi, \textit{Indispensable Hibi relations and Gr\"obner bases}, preprint 2011, arXiv:1203.0438v1

\bibitem{St}
R.\ P.\ Stanley, \textit{Enumerative Combinatorics, Volume I},  Wadsworth \& Brooks/Cole, Monterey, CA, 1986.

\bibitem {S} B.\ Sturmfels, \textit{Gr\"obner Bases and Convex Polytopes}, American Mathematical  Society, 1995.

\end{thebibliography}
\end{document}